\numberwithin{equation}{section} 
\theoremstyle{plain}
\newtheorem{theo+}           {Theorem}      [section]
\newtheorem{prop+}  [theo+]  {Proposition}
\newtheorem{coro+}  [theo+]  {Corollary}
\newtheorem{lemm+}  [theo+]  {Lemma}
\newtheorem{defi+}  [theo+]  {Definition}
\theoremstyle{definition}
\newtheorem{rema+}  [theo+]  {Remark}
\newtheorem{prob+}  [theo+]  {Problem}
\newtheorem{exam+}  [theo+]  {Example}
\newenvironment{theorem}{\begin{theo+}}{\end{theo+}}
\newenvironment{proposition}{\begin{prop+}}{\end{prop+}}
\newenvironment{corollary}{\begin{coro+}}{\end{coro+}}
\newenvironment{lemma}{\begin{lemm+}}{\end{lemm+}}
\newenvironment{texdraw}{\leavevmode\btexdraw}{\etexdraw}
\newskip\Einheit \Einheit=0.6cm
\newdimen\xdim \newdimen\ydim \newdimen\PfadD@cke \newdimen\Pfadd@cke
\def\PfadDicke#1{\PfadD@cke#1 \divide\PfadD@cke by2 \Pfadd@cke\PfadD@cke \multiply\PfadD@cke by2}
\long\def\LOOP#1\REPEAT{\def\BODY{#1}\ITERATE}
\def\ITERATE{\BODY \let\next\ITERATE \else\let\next\relax\fi \next}
\let\REPEAT=\fi
\def\Punkt{\hbox{\raise-2pt\hbox to0pt{\hss\scriptsize$\bullet$\hss}}}
\def\DuennPunkt(#1,#2){\unskip
  \raise#2 \Einheit\hbox to0pt{\hskip#1 \Einheit
          \raise-2.5pt\hbox to0pt{\hss\normalsize$\bullet$\hss}\hss}}
\def\NormalPunkt(#1,#2){\unskip
  \raise#2 \Einheit\hbox to0pt{\hskip#1 \Einheit
          \raise-3pt\hbox to0pt{\hss\large$\bullet$\hss}\hss}}
\def\DickPunkt(#1,#2){\unskip
  \raise#2 \Einheit\hbox to0pt{\hskip#1 \Einheit
          \raise-4pt\hbox to0pt{\hss\Large$\bullet$\hss}\hss}}
\def\DickerPunkt(#1,#2){\unskip
  \raise#2 \Einheit\hbox to0pt{\hskip#1 \Einheit
          \raise-3.8pt\hbox to0pt{\hss\LARGE$\bullet$\hss}\hss}}
\def\Kreis(#1,#2){\unskip
  \raise#2 \Einheit\hbox to0pt{\hskip#1 \Einheit
          \raise-4pt\hbox to0pt{\hss\Large$\circ$\hss}\hss}}
 \def\Kreisbig(#1,#2){\unskip
   \raise#2 \Einheit\hbox to0pt{\hskip#1 \Einheit
           \raise-3.8pt\hbox to0pt{\hss\LARGE$\circ$\hss}\hss}}
\def\Diagonale(#1,#2)#3{\unskip\leavevmode
  \xcoord#1\relax \ycoord#2\relax
      \raise\ycoord \Einheit\hbox to0pt{\hskip\xcoord \Einheit
         \unitlength\Einheit
         \line(1,1){#3}\hss}}
\def\AntiDiagonale(#1,#2)#3{\unskip\leavevmode
  \xcoord#1\relax \ycoord#2\relax \advance\xcoord by -0.05\relax
      \raise\ycoord \Einheit\hbox to0pt{\hskip\xcoord \Einheit
         \unitlength\Einheit
         \line(1,-1){#3}\hss}}
\def\Pfad(#1,#2),#3\endPfad{\unskip\leavevmode
  \xcoord#1 \ycoord#2 \thicklines\ZeichnePfad#3\endPfad\thinlines}
\def\ZeichnePfad#1{\ifx#1\endPfad\let\next\relax
  \else\let\next\ZeichnePfad
    \ifnum#1=1
      \raise\ycoord \Einheit\hbox to0pt{\hskip\xcoord \Einheit
         \vrule height\Pfadd@cke width1 \Einheit depth\Pfadd@cke\hss}%
      \advance\xcoord by 1
    \else\ifnum#1=2
      \raise\ycoord \Einheit\hbox to0pt{\hskip\xcoord \Einheit
        \hbox{\hskip-1pt\vrule height1 \Einheit width\PfadD@cke depth0pt}\hss}%
      \advance\ycoord by 1
    \else\ifnum#1=3
      \raise\ycoord \Einheit\hbox to0pt{\hskip\xcoord \Einheit
         \unitlength\Einheit
         \line(1,1){1}\hss}
      \advance\xcoord by 1
      \advance\ycoord by 1
    \else\ifnum#1=4
      \raise\ycoord \Einheit\hbox to0pt{\hskip\xcoord \Einheit
         \unitlength\Einheit
         \line(1,-1){1}\hss}
      \advance\xcoord by 1
      \advance\ycoord by -1
    \fi\fi\fi\fi
  \fi\next}
\def\hSSchritt{\leavevmode\raise-.4pt\hbox to0pt{\hss.\hss}\hskip.2\Einheit
  \raise-.4pt\hbox to0pt{\hss.\hss}\hskip.2\Einheit
  \raise-.4pt\hbox to0pt{\hss.\hss}\hskip.2\Einheit
  \raise-.4pt\hbox to0pt{\hss.\hss}\hskip.2\Einheit
  \raise-.4pt\hbox to0pt{\hss.\hss}\hskip.2\Einheit}
\def\vSSchritt{\vbox{\baselineskip.2\Einheit\lineskiplimit0pt
\hbox{.}\hbox{.}\hbox{.}\hbox{.}\hbox{.}}}
\def\DSSchritt{\leavevmode\raise-.4pt\hbox to0pt{%
  \hbox to0pt{\hss.\hss}\hskip.2\Einheit
  \raise.2\Einheit\hbox to0pt{\hss.\hss}\hskip.2\Einheit
  \raise.4\Einheit\hbox to0pt{\hss.\hss}\hskip.2\Einheit
  \raise.6\Einheit\hbox to0pt{\hss.\hss}\hskip.2\Einheit
  \raise.8\Einheit\hbox to0pt{\hss.\hss}\hss}}
\def\dSSchritt{\leavevmode\raise-.4pt\hbox to0pt{%
  \hbox to0pt{\hss.\hss}\hskip.2\Einheit
  \raise-.2\Einheit\hbox to0pt{\hss.\hss}\hskip.2\Einheit
  \raise-.4\Einheit\hbox to0pt{\hss.\hss}\hskip.2\Einheit
  \raise-.6\Einheit\hbox to0pt{\hss.\hss}\hskip.2\Einheit
  \raise-.8\Einheit\hbox to0pt{\hss.\hss}\hss}}
\def\SPfad(#1,#2),#3\endSPfad{\unskip\leavevmode
  \xcoord#1 \ycoord#2 \ZeichneSPfad#3\endSPfad}
\def\ZeichneSPfad#1{\ifx#1\endSPfad\let\next\relax
  \else\let\next\ZeichneSPfad
    \ifnum#1=1
      \raise\ycoord \Einheit\hbox to0pt{\hskip\xcoord \Einheit
         \hSSchritt\hss}%
      \advance\xcoord by 1
    \else\ifnum#1=2
      \raise\ycoord \Einheit\hbox to0pt{\hskip\xcoord \Einheit
        \hbox{\hskip-2pt \vSSchritt}\hss}%
      \advance\ycoord by 1
    \else\ifnum#1=3
      \raise\ycoord \Einheit\hbox to0pt{\hskip\xcoord \Einheit
         \DSSchritt\hss}
      \advance\xcoord by 1
      \advance\ycoord by 1
    \else\ifnum#1=4
      \raise\ycoord \Einheit\hbox to0pt{\hskip\xcoord \Einheit
         \dSSchritt\hss}
      \advance\xcoord by 1
      \advance\ycoord by -1
    \fi\fi\fi\fi
  \fi\next}
\def\Koordinatenachsen(#1,#2){\unskip
 \hbox to0pt{\hskip-.5pt\vrule height#2 \Einheit width.5pt depth1 \Einheit}%
 \hbox to0pt{\hskip-1 \Einheit \xcoord#1 \advance\xcoord by1
    \vrule height0.25pt width\xcoord \Einheit depth0.25pt\hss}}
\def\Koordinatenachsen(#1,#2)(#3,#4){\unskip
 \hbox to0pt{\hskip-.5pt \ycoord-#4 \advance\ycoord by1
    \vrule height#2 \Einheit width.5pt depth\ycoord \Einheit}%
 \hbox to0pt{\hskip-1 \Einheit \hskip#3\Einheit 
    \xcoord#1 \advance\xcoord by1 \advance\xcoord by-#3 
    \vrule height0.25pt width\xcoord \Einheit depth0.25pt\hss}}
\def\Gitter(#1,#2){\unskip \xcoord0 \ycoord0 \leavevmode
  \LOOP\ifnum\ycoord<#2
    \loop\ifnum\xcoord<#1
      \raise\ycoord \Einheit\hbox to0pt{\hskip\xcoord \Einheit\Punkt\hss}%
      \advance\xcoord by1
    \repeat
    \xcoord0
    \advance\ycoord by1
  \REPEAT}
\def\Gitter(#1,#2)(#3,#4){\unskip \xcoord#3 \ycoord#4 \leavevmode
  \LOOP\ifnum\ycoord<#2
    \loop\ifnum\xcoord<#1
      \raise\ycoord \Einheit\hbox to0pt{\hskip\xcoord \Einheit\Punkt\hss}%
      \advance\xcoord by1
    \repeat
    \xcoord#3
    \advance\ycoord by1
  \REPEAT}
\def\Label#1#2(#3,#4){\unskip \xdim#3 \Einheit \ydim#4 \Einheit
  \def\lo{\advance\xdim by-.5 \Einheit \advance\ydim by.5 \Einheit}%
  \def\llo{\advance\xdim by-.25cm \advance\ydim by.5 \Einheit}%
  \def\loo{\advance\xdim by-.5 \Einheit \advance\ydim by.25cm}%
  \def\o{\advance\ydim by.25cm}%
  \def\ro{\advance\xdim by.5 \Einheit \advance\ydim by.5 \Einheit}%
  \def\rro{\advance\xdim by.25cm \advance\ydim by.5 \Einheit}%
  \def\roo{\advance\xdim by.5 \Einheit \advance\ydim by.25cm}%
  \def\l{\advance\xdim by-.30cm}%
  \def\r{\advance\xdim by.30cm}%
  \def\lu{\advance\xdim by-.5 \Einheit \advance\ydim by-.6 \Einheit}%
  \def\llu{\advance\xdim by-.25cm \advance\ydim by-.6 \Einheit}%
  \def\luu{\advance\xdim by-.5 \Einheit \advance\ydim by-.30cm}%
  \def\u{\advance\ydim by-.30cm}%
  \def\ru{\advance\xdim by.5 \Einheit \advance\ydim by-.6 \Einheit}%
  \def\rru{\advance\xdim by.25cm \advance\ydim by-.6 \Einheit}%
  \def\ruu{\advance\xdim by.5 \Einheit \advance\ydim by-.30cm}%
  #1\raise\ydim\hbox to0pt{\hskip\xdim
     \vbox to0pt{\vss\hbox to0pt{\hss$#2$\hss}\vss}\hss}%
}
\def\ldreieck{\bsegment
  \rlvec(0.866025403784439 .5) \rlvec(0 -1)
  \rlvec(-0.866025403784439 .5)  
  \savepos(0.866025403784439 -.5)(*ex *ey)
        \esegment
  \move(*ex *ey)
        }
\def\rdreieck{\bsegment
  \rlvec(0.866025403784439 -.5) \rlvec(-0.866025403784439 -.5)  \rlvec(0 1)
  \savepos(0 -1)(*ex *ey)
        \esegment
  \move(*ex *ey)
        }
\def\rhombus{\bsegment
  \rlvec(0.866025403784439 .5) \rlvec(0.866025403784439 -.5) 
  \rlvec(-0.866025403784439 -.5)  \rlvec(0 1)        
  \rmove(0 -1)  \rlvec(-0.866025403784439 .5) 
  \savepos(0.866025403784439 -.5)(*ex *ey)
        \esegment
  \move(*ex *ey)
        }
\def\RhombusA{\bsegment
  \rlvec(0.866025403784439 .5) \rlvec(0.866025403784439 -.5) 
  \rlvec(-0.866025403784439 -.5) \rlvec(-0.866025403784439 .5) 
  \savepos(0.866025403784439 -.5)(*ex *ey)
        \esegment
  \move(*ex *ey)
        }
\def\RhombusB{\bsegment
  \rlvec(0.866025403784439 .5) \rlvec(0 -1)
  \rlvec(-0.866025403784439 -.5) \rlvec(0 1) 
  \savepos(0 -1)(*ex *ey)
        \esegment
  \move(*ex *ey)
        }
\def\RhombusC{\bsegment
  \rlvec(0.866025403784439 -.5) \rlvec(0 -1)
  \rlvec(-0.866025403784439 .5) \rlvec(0 1) 
  \savepos(0.866025403784439 -.5)(*ex *ey)
        \esegment
  \move(*ex *ey)
        }
\def\vdSchritt{\bsegment
  \lpatt(.05 .13)
  \rlvec(0 -1) 
  \savepos(0 -1)(*ex *ey)
        \esegment
  \move(*ex *ey)
        }
\def\odSchritt{\bsegment
  \lpatt(.05 .13)
  \rlvec(-0.866025403784439 -.5) 
  \savepos(-0.866025403784439 -.5)(*ex *ey)
        \esegment
  \move(*ex *ey)
        }
\def\ringerl(#1 #2){\move(#1 #2)\fcir f:0 r:.15}
\def\ringop(#1 #2){\move(#1 #2)\lcir r:.15}
\newcommand{\ti}{\textup i}
\newcommand{\sgn}{\operatorname{sgn}}
\newcommand{\pushright}[1]{\ifmeasuring@#1\else\omit\hfill$\displaystyle#1$\fi\ignorespaces}
\begin{document}

\baselineskip 18pt
\larger[2]
\title
[Selberg integrals, Askey--Wilson polynomials and lozenge tilings]
{Selberg integrals, Askey--Wilson polynomials and \\
 lozenge tilings of a hexagon with a triangular hole} 
\author{Hjalmar Rosengren}
\address
{Department of Mathematical Sciences
\\ Chalmers University of Technology and G\"oteborg
 University\\SE-412~96 G\"oteborg, Sweden}
\email{hjalmar@chalmers.se}
\urladdr{http://www.math.chalmers.se/{\textasciitilde}hjalmar}
\subjclass[2000]{}

\begin{abstract}
We obtain an explicit formula for a certain weighted enumeration
of lozenge tilings of a hexagon with an arbitrary triangular hole. The complexity of
our expression depends on the distance from the hole to the center 
of the hexagon. This proves and  generalizes conjectures of Ciucu et al., who considered the case of plain enumeration when the triangle is located at or very near the center.
Our proof uses Askey--Wilson polynomials as a tool to relate discrete and continuous Selberg-type integrals.
\end{abstract}

\maketitle

\section{Introduction}  

 One of the most  influential 
 results of enumerative combinatorics is MacMahon's formula \cite{mm}
 $$\frac{H(a)H(b)H(c)H(a+b+c)}{H(a+b)H(a+c)H(b+c)} $$
 for the number of plane partitions contained in a box of size $a\times b\times c$, where $H(n)=\prod_{k=1}^n(k-1)!$. 
Equivalently, this identity enumerates
 lozenge tilings of a hexagon with side lengths $a$, $b$ and $c$. 

There has been quite
 a lot of work on lozenge 
tilings of a hexagon with various kinds of holes
\cite{c1,c2,c3,cf,cf2,ck1,ck2,ck3,ck4,e1,e2,hg,l1,l2,l3,ok,pro,pr}. 
In the seminal paper \cite{pro},
Propp  conjectured an explicit formula for the number of tilings of a hexagon $H$ whose side lengths are almost equal, with a small triangle $T$  removed from the center of $H$ (more  precisely, in the notation explained in \S \ref{wess}, this is the region $H\setminus T$ with $a=b=c$, $m=1$, $M=N=0$).
This conjecture was proved  in \cite{c1,hg}. 
More generally, Ciucu et al.\ \cite{cekz}  enumerated the tilings when the side lengths of $H$ and $T$ are arbitrary, but $T$ is still  positioned at (or very near) the center of $H$. They also conjectured enumerations
for some adjacent positions of $T$.

In the present paper, we consider  the general case, when the position of $T$ within $H$ is arbitrary.  Our main result, Theorem \ref{mt}, expresses a weighted extension of the number of tilings as a determinant, whose complexity depends  on the distance of $T$ from the center of $H$. Thus, it is a closed formed evaluation if the position of $T$ relative to the center of $H$ is fixed, but the side lengths of $T$ and $H$ are arbitrary.

As in \cite{cekz}, the starting point of our proof is the
 Gessel--Viennot method \cite{gv}, which gives an explicit determinant formula for the weighted enumeration. In \cite{cekz} the determinant is computed using the method of identification of factors \cite{kad}. It seems very difficult to handle
the more general determinants that we encounter in this way.
Instead, we derive a chain of intermediate expressions for our weighted enumeration as indicated in the following diagram.

$$\begin{CD}
\text{Weighted enumeration}@>\text{Gessel--Viennot}>\S\ref{gvs}>\text{Determinant I}\\
@. @V\text{minor expansion}V\S\ref{dss} V\\
\text{Determinant II}@<\text{Cauchy--Binet}<\S\ref{aws}<\text{Discrete Selberg integral}\\
@V\S\ref{css}V\text{Christoffel--Heine}V @.\\
\text{Continuous Selberg integral}@>\text{Christoffel--Heine}>\S\ref{css}>\text{Determinant III}
\end{CD}$$

\vspace*{1mm}
Here, Determinant I is obtained  by the Gessel--Viennot method. It has completely factored entries and its dimension is equal to one of the side lengths of $H$. Applying an appropriate minor expansion leads to a multivariable basic hypergeometric series. As it contains the factor $\prod_{i<j}(q^{m_j}-q^{m_i})^2$, with $m_j$ being summation indices, it can be considered as a discrete Selberg-type integral \cite{fw}. Special cases of this sum appear in \cite{cekz}, but are considered there as  consequences of the enumeration rather than as a tool.

In general, Selberg-type refers to hypergeometric series or integrals containing factors like $\prod_{i<j}|x_j-x_i|^c$, where the archetypal example is the integral 
$$\int_{[0,1]^n}\prod_{1\leq i<j\leq n}|x_j-x_i|^c\prod_{j=1}^nx_j^{a-1}(1-x_j)^{b-1}\,dx_j. $$ 
The cases $c=1$ and $c=2$ are determinantal, in the sense that they can be 
expressed as determinants of one-variable integrals.
In our setting, an application of 
 the Cauchy--Binet identity leads to  an
alternative determinant formula for the weighted enumeration, Determinant II. 
It is quite different from Determinant I as its entries are 
Askey--Wilson polynomials and its dimension is equal to the side length
of $T$. Using classical results on orthogonal polynomials due to Christoffel and Heine, we can rewrite Determinant II as a continuous Selberg integral, where
$\prod_{i<j}(x_j-x_i)^2$ is integrated against the Askey--Wilson orthogonality measure. The key observation is now  that the 
results of Christoffel and Heine can be applied in a different way to the same 
Selberg integral. This leads to our end result, Determinant III. 
Here, the matrix entries are again Askey--Wilson polynomials, but in base $q^2$ rather than $q$. The size of the determinant is related to the distance from $T$ to the center of $H$.  

The above decription of our proof is not quite accurate, as we glossed over two important technical aspects. First, Determinant I only applies when the side length $m$ of $T$ is even. To extend our result to odd $m$, we 
need an a priori result on how our weighted enumeration behaves as a function of $m$, Lemma \ref{pl}. This is achieved by another application of the 
Gessel--Viennot method. Second, the orthogonality relation for
 Askey--Wilson polynomials is actually  not valid for the specific parameters
appearing from the tiling problem. Thus, the continuous Selberg integral mentioned above does not make sense as an expression for the weighted enumeration, but only appears after continuation to a different range of parameters.

The explicit expression given in Theorem \ref{mt} is admittedly rather complicated, but we believe that 
the method of proof is  more interesting than the result. 
It seems  likely that there are other problems related to tilings and plane partitions that can be approached with similar methods.
For instance, one could ask for a ``dual'' of our result in the sense of \cite{ck3}.

{\bf Acknowledgements:} This research was partially supported by the Swedish Science Research Council.  All figures are based on packages
written by Theresia Eisenk\"olbl and Christian Krattenthaler.

\section{Main result}

\subsection{Weighted enumeration of tilings}
\label{wess}

Consider the triangular lattice in the plane, formed by equilateral triangles
of side length $1$ and height $\phi=\sqrt 3/2$. 
On this lattice, we draw a convex hexagon $H$ and remove
an equilateral  triangle $T\subseteq H$. We are interested in tilings of  $H\setminus T$ by lozenges, that is, by
 quadrilaterals formed by adjoining two adjacent lattice triangles.
Using the  bijection to lattice paths discussed in \S \ref{gvs}, it is easy to see that, for such tilings to exist, 
 $H$ must have consecutive side-lengths $a$, $b+m$, $c$, $a+m$, $b$, $c+m$,
where $m$ is the side-length of $T$. 
Moreover, the sides of $T$ must be 
 parallel to the long sides (of length $a+m$, $b+m$ and $c+m$)  
of $H$.

We will refer to the sides of $H$ by the  expression for their length; for instance, the side $b+m$ is the second side in the ordering given above. We will  picture the region $H\setminus T$ as in Figure \ref{tiling}. 
This  allows us to use terminology such as ``horizontal'' to 
refer to the direction orthogonal to the side $c$.

\begin{figure}
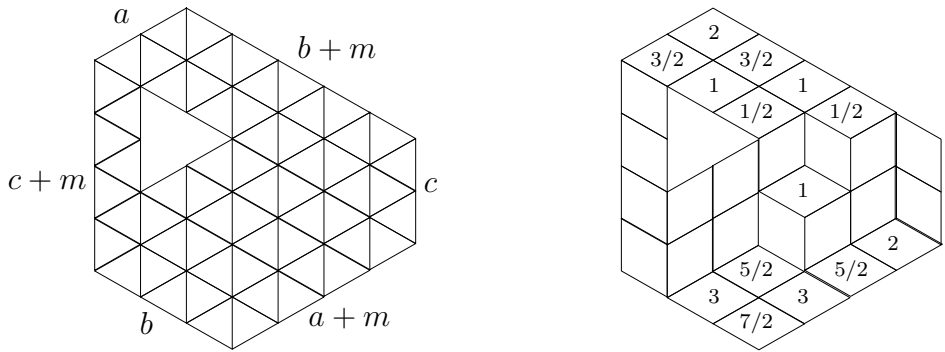
 
\centertexdraw{
\drawdim cm \setunitscale.7
\linewd.01
\rhombus \rhombus \rhombus \rhombus \rhombus 
\ldreieck
 \move(-0.866025 -.5)
 \rhombus \rhombus \rhombus \rhombus \rhombus \rhombus 
\ldreieck
 \move(-0.866025 -0.5)
\rdreieck\ldreieck 
 \rmove(0.866025 -0.5)
  \rhombus \rhombus \rhombus \rhombus 
 \move(-.866025 -1.5)
 \rdreieck\ldreieck 
  \rhombus \rhombus \rhombus \rhombus 
  \move(-.866025 -2.5)
  \rdreieck \rhombus \rhombus \rhombus \rhombus
  \move(-.866025 -3.5)
 \rdreieck \rhombus \rhombus \rhombus

 \move(10 0)
 \RhombusA  \RhombusA \RhombusA \RhombusA \RhombusB \RhombusB \RhombusA
 \move(9.133975 -.5)
  \RhombusA \RhombusA \RhombusA \RhombusB \RhombusA \RhombusB 
  \RhombusA 
  \move(10.866025 -2.5)
\RhombusB \RhombusB \RhombusA \RhombusA
\move(10 -3)
\RhombusB \RhombusB \RhombusA \RhombusA
 \move(9.133975 -0.5) \RhombusC 
 \move(9.133975 -1.5) \RhombusC 
 \move(9.133975 -2.5) \RhombusC 
 \move(9.133975 -3.5) \RhombusC 
 \move(14.330127 -1.5) \RhombusC 
\move(14.330127 -2.5) \RhombusC 
\htext(10.8 -0.1){$\scriptstyle 2$}
\htext(9.7 -0.7){$\scriptstyle{3/2}$}
\htext(11.4 -0.7){$\scriptstyle{3/2}$}
\htext(10.8 -1.1){$\scriptstyle 1$}
\htext(12.5 -1.1){$\scriptstyle 1$}
\htext(11.4 -1.7){$\scriptstyle{1/2}$}
\htext(13.1 -1.7){$\scriptstyle{1/2}$}
\htext(12.5 -3.1){$\scriptstyle 1$}
\htext(14.2 -4.1){$\scriptstyle 2$}
\htext(13.2 -4.7){$\scriptstyle 5/2$}
\htext(11.4 -4.7){$\scriptstyle 5/2$}
\htext(10.8 -5.1){$\scriptstyle 3$}
\htext(12.5 -5.1){$\scriptstyle 3$}
\htext(11.4 -5.7){$\scriptstyle 7/2$}
 \htext(-0.5 0.2){$a$}
 \htext(3.0 -0.5){$b+m$}
 \htext(5.4 -3.0){$c$}
 \htext(-2.5 -3){$c+m$}
 \htext(0 -5.7){$b$}
 \htext(3.2 -5.6){$a+m$}
}
\caption{The region $H\setminus T$ described by $(a,b,c,m,A,B,C)=(2,3,2,2,4,2,1)$. A tiling of this region,  with each horizontal lozenge labelled by its height.}
\label{tiling}
\end{figure}

To specify the position of $T$ within  $H$, let $A\phi$, $B\phi$ and $C\phi$ denote the distance from $T$ to the line containing the side  $a+m$, $b+m$ and $c+m$, respectively. It is easy to see that
\begin{equation}\label{vr}A+B+C=a+b+c, \end{equation}
for instance, by applying Viviani's theorem to the  triangle  formed by extending  the short sides of $H$.
The distances from $T$ to the lines containing the short sides
of $H$ are $(b+c-A)\phi$, $(a+c-B)\phi$ and $(a+b-C)\phi$. Thus, 
\begin{equation}\label{ii}0\leq A\leq b+c,\qquad 0\leq B\leq a+c,\qquad 0\leq C\leq a+b. \end{equation}
Conversely, any non-negative integers $a$, $b$, $c$, $A$, $B$, $C$ and $m$
subject to \eqref{vr}  and \eqref{ii} describe a region $H\setminus T$. Note that we include degenerate cases when  some sides of $H$ have length zero, when no triangle is removed
($m=0$) or when $T$ touches the boundary of $H$.
We will also specify the location of $T$ by the coordinates
\begin{equation}\label{dc} M=2A-b-c,\qquad N=2B-a-c,\end{equation}
so that
$$A=\frac{b+c+M}{2},\quad B=\frac{a+c+N}{2},\quad C=\frac{a+b-M-N}2. $$
Then, $M$ and $N$ are integers of the same parity as $b+c$ and $a+c$, respectively, such that
$$|M|\leq b+c,\qquad |N|\leq a+c,\qquad |M+N|\leq a+b. $$
As an example, the region in Figure \ref{tiling} corresponds to $(M,N)=(3,0)$.

Note that $(M,N)=(0,0)$ corresponds to  $T$ being located at the center
of  $H$. This case, and a few other cases with $T$ nearly central, were 
studied in \cite{cekz}. To be precise, these authors enumerated the  tilings when $(M,N)$ equals $(0,0)$ and $(0,1)$ and conjectured enumerations when $(M,N)$ equals 
 $(0,2)$ and $(0,3)$. 
In the present paper, we
will explain how to prove these conjectures and  obtain analogous results for \emph{any} $M$ and $N$. 

More generally, we will consider a weighted enumeration of tilings. We define the height $h$ of a
horizontal tile $Q$ to be
 the vertical distance from the center of $Q$ to  the center of $T$
(see  Figure \ref{tiling}). Our main object of study is the partition function
\begin{equation}\label{z}Z(q)=\sum_{\text{tilings of } H\setminus T}\,\prod_{\text{horizontal tiles}}
\frac{q^{h}+q^{-h}}{2}. \end{equation}
In particular, $Z(1)$ is 
the total number of tilings.
Our weight function is a special case of weights introduce in \cite{bgr} for plane partitions and \cite{sc2} for lattice paths. 

Note that, when $m=0$, $Z(q)$ is different from the volume generating function
for plane partitions 
computed by MacMahon. Up to a power of $q$, the latter is equal to
$$\tilde Z(q)=\sum_{\text{tilings}}\,\prod_{\text{horizontal tiles}}q^{2\tilde h},$$
where $\tilde h$  is the vertical distance from the center of a tile
to the bottom corner of $H$. When $m>0$, the function
$Z$ behaves  better than $\tilde Z$, being given by  
completely factored expressions in 
situations (e.g.\ $(M,N)=(0,0)$) when such expressions exist for the enumeration problem.

\subsection{Notation}

We will write $\sgn(k)=1$ for $k\geq 0$ and $\sgn(k)=-1$ for $k<0$.
Recall also the standard notation \cite{gr} 
\begin{align*}(a;q)_n&=\prod_{j=0}^{n-1}(1-aq^j), \\
(a_1,\dots,a_m;q)_n&=(a_1;q)_n\dotsm (a_m;q)_n, \\
{}_{r+1}\phi_r\left(\begin{matrix}a_1,\dots,a_{r+1}\\b_1,\dots,b_r\end{matrix};q,x\right)&=\sum_{k=0}^\infty\frac{(a_1,\dots,a_{r+1};q)_k}{(q,b_1,\dots,b_r;q)_k}\,x^k.
\end{align*}

When $x=(x_1,\dots,x_m)$, we will write $\Delta(x)=\prod_{1\leq i<j\leq m}(x_j-x_i)$
and also use notation such as $\Delta(q^k)=\prod_{1\leq i<j\leq m}(q^{k_j}-q^{k_i})$.
We  introduce the multiple basic hypergeometric series
\begin{equation}\label{mhs}{}_{r+1}\phi_r^{(m)}\left(\begin{matrix}a_1,\dots,a_{r+1}\\b_1,\dots,b_r\end{matrix};q,x\right)=\sum_{0\leq k_1<k_2<\dots<k_m}\Delta(q^k)^2
\prod_{j=1}^m\frac{(a_1,\dots,a_{r+1};q)_{k_j}x^{k_j}}{(q,b_1,\dots,b_r;q)_{k_j}}.
\end{equation}
In view of the factor $\Delta(q^k)^2$, it
 can be thought of as a discrete Selberg-type integral.
 
We introduce the $q$-hyperfactorial
\begin{equation}\label{qsf}H_q(m)=\begin{cases}\prod_{j=1}^m\left(q^{-\frac j2}-q^{\frac j2}\right)^{m-j}, & m=0,1,2,\dots,\\
\prod_{j=1}^{m+\frac 12}\left(q^{\frac 14-\frac j2}-q^{\frac j2-\frac 14}\right)^{m+\frac 12-j}, & m=-1/2,1/2,3/2,\dots.
\end{cases} \end{equation}
Equivalently,
$$H_q(m)=\begin{cases}q^{-\frac 12\binom{m+1}3}\prod_{j=1}^m(q;q)_{j-1}, & m=0,1,2,\dots,\\
q^{-\frac 1{16}\binom{2m+1}3}\prod_{j=1}^{m+\frac 12}(q^{\frac 12};q)_{j-1}, & m=-1/2,1/2,3/2,\dots.
\end{cases} $$
Deleting the prefactor from these expressions, we will write
$$\tilde H_q(m)=\begin{cases}\prod_{j=1}^m(q;q)_{j-1}, & m=0,1,2,\dots,\\
\prod_{j=1}^{m+\frac 12}(q^{\frac 12};q)_{j-1}, & m=-1/2,1/2,3/2,\dots.
\end{cases} $$
We  use both notations since our main result is easier to state in terms of $H_q$, but for the proof it is often more convenient to work with $\tilde H_q$.

We will sometimes write $H_q^+=H_q$ and
$$H_q^-(m)=\frac{H_{q^2}(m)}{H_q(m)}=\begin{cases}
\prod_{j=1}^m\left(q^{-\frac j2}+q^{\frac j2}\right)^{m-j}, & m=0,1,2,\dots,\\
\prod_{j=1}^{m+\frac 12}\left(q^{\frac 14-\frac j2}+q^{\frac j2-\frac 14}\right)^{m+\frac 12-j}, & m=-1/2,1/2,3/2,\dots.
\end{cases} $$
Repeated arguments stands for a product; for instance,
$$H_q(a_1,\dots,a_m)=H_q(a_1)\dotsm H_q(a_m). $$
Similar notation will be used for $\tilde H_q$. We collect some useful facts about the functions $H_q$ and $\tilde H_q$
in an Appendix.

\subsection{Statement of main result}
Our main result is formulated in terms of   determinants 
\begin{equation}\label{qd}Q^{MNn}(\alpha,\beta,\gamma;q)=\det_{1\leq j,k\leq M+N}\left(Q^{MNn}_{jk}(\alpha,\beta,\gamma;q)\right),\end{equation}
labelled by non-negative integers $M$, $N$, $n$ and generic parameters
$\alpha$, $\beta$, $\gamma$. The matrix entries $Q^{MNn}_{jk}(\alpha,\beta,\gamma;q)$ are given for 
for   $1\leq k\leq M$ and $n+j$ odd by
\begin{subequations}\label{qme}
\begin{multline}\label{qma} \frac{(\alpha^2,\alpha^2\gamma^2;q^2)_{(n+j-1)/2}(\alpha^2\beta^2;q^2)_{k-1}}{q^{\frac14(n+j-1)(n+j-3)+\binom{k-1}2}\alpha^{n+j+k-2}\beta^{k-1}\gamma^{\frac 12(n+j-1)}}\\
\times\,{}_{4}\phi_3\left(\begin{matrix}q^{1-j-n},\alpha^2\beta^2\gamma^2 q^{n+j-3},\alpha^4q^{2k-2},q^{2-2k}\\
\alpha^2,\alpha^2\beta^2,\alpha^2\gamma^2\end{matrix};q^2,q^2\right)\end{multline}
and for  $1\leq k\leq M$ and $n+j$ even by
\begin{multline}\label{qmb}
\ (\alpha q^{k-1}-\alpha^{-1}q^{1-k})\frac{(\alpha^2q^2,\alpha^2\gamma^2;q^2)_{(n+j-2)/2}(\alpha^2\beta^2;q^2)_{k-1}}{q^{\frac14(n+j-2)^2+\binom{k-1}2}\alpha^{n+j+k-3}\beta^{k-1}\gamma^{\frac 12(n+j-2)}}\\
\times\,{}_{4}\phi_3\left(\begin{matrix}q^{2-j-n},\alpha^2\beta^2\gamma^2 q^{n+j-2},\alpha^4q^{2k-2},q^{2-2k}\\
\alpha^2q^2,\alpha^2\beta^2,\alpha^2\gamma^2\end{matrix};q^2,q^2\right).\end{multline}
\end{subequations}
For the remaining cases $(M+1\leq k\leq M+N)$, they are determined by
$$Q^{MNn}_{j,M+k}(\alpha,\beta,\gamma;q)=Q^{NMn}_{j,k}(\beta,\alpha,\gamma;q), \qquad 1\leq k\leq N. $$
Though the structure of this determinant may seem complicated, 
we will see in \S \ref{css} that it appears  naturally in the 
context of Askey--Wilson polynomials. 
It is easy to check that  
\begin{equation}\label{qdi}Q^{MNn}(\alpha,\beta,\gamma;q)=(-1)^{MN+n(M+N)}Q^{MNn}(\alpha^{-1},\beta^{-1},\gamma^{-1};q^{-1}). \end{equation}

The  matrix elements \eqref{qme} are Laurent polynomials in $\alpha$, $\beta$ and  $ \gamma$.
In particular, we may (and will) specialize these variables to points where the ${}_4\phi_3$-sums without the prefactor are singular.
Note also that each matrix entry 
is a sum of at most $\max(M,N)$ terms. 
Thus, the following result gives a closed form evaluation of
$Z(q)$ for  fixed  $M$ and $N$. 

\begin{theorem}\label{mt}
With $\varepsilon=\sgn(MN)$, we have
\begin{equation}\label{mti}Z(q)=C\,Q^{|M|,|N|,b}\big(q^{\frac 12(1-b-c-m-|M|)},-\varepsilon q^{\frac 12(1+a+c+m-|N|)},q^{\frac12(m+1)};q\big), \end{equation}
where
{\allowdisplaybreaks
\begin{align}
\nonumber C&=\frac{(-1)^{\binom{|N|}2}\varepsilon^{\binom{|M|}2+N(b+M)}}{2^{\frac 12m(a+b+M+N)+ab-\frac{a+b}2+\frac 12\max(|a-b|,|M-N|)}}
\frac{H_{q^2}\left(\frac m2\right)^2}{H_q(|M|,|N|)}\\
\nonumber &\quad\times H_{q^2}\left(\left[\frac a2\right],\left[\frac {a+1}2\right],\left[\frac a2\right]+\frac{m+1}2,\left[\frac {a+1}2\right]+\frac{m-1}2\right)\\
\nonumber &\quad\times H_{q^2}\left(\left[\frac b2\right],\left[\frac {b+1}2\right],\left[\frac b2\right]+\frac{m+1}2,\left[\frac {b+1}2\right]+\frac{m-1}2\right)\\
\nonumber &\quad\times\frac{ H_{q^2}\left(\big[\frac {c+|M|}2\big],\big[\frac {c+|M|+1}2\big],\big[\frac {c+|M|}2\big]+\frac{m+1}2,\big[\frac {c+|M|+1}2\big]+\frac{m-1}2\right)}{H_q\left(\frac{a+b-M-N}2,\frac{a+b+M+N}2+m\right)H_{q^2}\left(\frac{a+b-|M|-|N|+m+1}2,\frac{a+b+|M|+|N|+m-1}2\right)}\\
\nonumber &\quad\times \frac{H_{q^2}\left(\frac{a+c+N}{2}+m\right)}{H_{q^2}\left(\frac{a+c+N}2,\frac{a+c-|N|+m-1}2\right)H_{q^2}\left(\frac{a+c-|N|}2+m\right)^2H_{q^4}\left(\frac{a+c-|N|+m+1}2\right)}\\
\nonumber &\quad\times\frac{H_q^-(a+c+m)}{H_{q^2}^-\left(\frac{a+c-|N|+m}2,\frac{a+c+|N|+m-1}2,\frac{a+c+|N|+m}2\right)}\\
\nonumber &\quad\times \frac{H_{q^2}\left(\frac{b+c-M}{2}\right)}{H_{q^2}\left(\frac{b+c-M}2+m,\frac{b+c+|M|+m+1}2\right)H_{q^2}\left(\frac{b+c+|M|}2\right)^2 H_{q^4}\left(\frac{b+c+|M|+m-1}2\right)}\\
\nonumber &\quad\times \frac{H_q^-(b+c+m)}{H_{q^2}^-\left(\frac{b+c-|M|+m}2,\frac{b+c-|M|+m+1}2,\frac{b+c+|M|+m}2\right)}\frac 1{H_q^-\left(\left|\frac{a-b+M-N}2\right|,\left|\frac{a-b-M+N}2\right|\right)}\\
\nonumber &\quad\times H_{q^2}\left(\left[\frac{a+b+c-|N|}2\right ]+m,\left[\frac{a+b+c-|N|+1}2\right ]+m\right)\\
\nonumber &\quad\times H_{q^2}\left(\left[\frac{a+b+c-|N|}2\right ]+\frac{m+1}2,\left[\frac{a+b+c-|N|+1}2\right ]+\frac{m-1}2\right)\\
\label{mtc} &\quad\times\frac{H_q^{-\varepsilon}\left(\frac{a+b+2c-|M|+|N|}2+m,\frac{a+b+2c+|M|-|N|}2+m\right)}{H_q^{-\varepsilon}\left(\frac{a+b+2c-|M|-|N|}2+m,\frac{a+b+2c+|M|+|N|}2+m\right)}.
\end{align}
}
\end{theorem}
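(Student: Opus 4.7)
The plan is to follow the diagram sketched in the introduction. First, I would encode each lozenge tiling of $H\setminus T$ as a family of non-intersecting lattice paths via the standard bijection between tilings and path systems on the triangular lattice. Applying the Gessel--Viennot lemma with the height weighting in \eqref{z} should then express $Z(q)$ as a determinant (Determinant~I) of dimension equal to a side length of $H$, whose entries are completely factored weighted path counts (essentially single-variable $q$-binomial-type sums, one for each pair of starting/ending points). This step is direct but requires checking that the multiplicative height weight distributes properly over the paths; the computation goes through most cleanly when $m$ is even, because for odd $m$ the heights of horizontal tiles become half-integers and the bijection picks up awkward shifts. For odd $m$ I would prove an a priori result (Lemma~\ref{pl}) on how $Z(q)$ depends on $m$ by a second Gessel--Viennot argument with different endpoints, and use it to extend the final formula from even to odd $m$.

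Second, I would perform a minor expansion of Determinant~I, converting it into a multiple basic hypergeometric sum of the type \eqref{mhs}: the Vandermonde-squared factor $\Delta(q^k)^2$ appears automatically from expanding along all rows, so the resulting expression is a discrete Selberg-type integral. Next I apply the Cauchy--Binet identity to this Selberg sum. The summand factors as a product of one-variable discrete measures, and after reindexing, the coefficients that arise are moments (against a suitable polynomial family) of the Askey--Wilson measure; this produces Determinant~II, of dimension $m$ and with Askey--Wilson polynomial entries. A technical subtlety at this stage is that the Askey--Wilson parameters read off from the tiling problem lie outside the region where the orthogonality measure is positive, so the identification must be made as a formal algebraic manipulation (Cauchy--Binet is a polynomial identity) and the continuous interpretation only emerges after analytic continuation in the parameters.

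Third, by the classical Christoffel--Heine identities expressing leading coefficients and squared norms of an orthogonal system through Selberg-type integrals of $\Delta(x)^2$ against the measure, Determinant~II can be rewritten as a single continuous Selberg integral over the (analytically continued) Askey--Wilson measure, with an additional polynomial factor of degree $M+N$ inserted. The key step is then to apply Christoffel--Heine \emph{in reverse} but relative to a different grouping of the integrand: one pulls the extra polynomial factor into the weight and recognizes the regrouped measure as the orthogonality measure of Askey--Wilson polynomials in base $q^2$ rather than $q$. The resulting determinantal evaluation is Determinant~III, whose size $M+N$ measures the displacement of $T$ from the centre of $H$ and whose entries are exactly the ${}_4\phi_3$-expressions in \eqref{qme}.

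Finally, I would consolidate all the prefactors: the product formulas from the Gessel--Viennot step, the normalizations of the minor expansion, the $q$-Selberg evaluations that convert discrete and continuous Selberg integrals into nested $q$-hyperfactorials $H_q$, $H_{q^2}$, $H_{q^4}$, and the Christoffel--Heine normalizing constants for both orthogonal systems. These combine to give the constant $C$ in \eqref{mtc}; the symmetry \eqref{qdi} and the $M\leftrightarrow -M$, $N\leftrightarrow -N$ symmetry of the original tiling problem provide a useful consistency check and explain the sign factors and the signs on the exponents of $H_q^\pm$. The main obstacle, in my view, is the middle passage: identifying the correct ``dual'' orthogonal system in base $q^2$ and justifying the two applications of Christoffel--Heine through the required analytic continuation of the Askey--Wilson parameters. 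Once that is set up, the bookkeeping leading to the precise form of $C$ in \eqref{mtc} is lengthy but essentially mechanical.
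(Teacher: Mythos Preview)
Your outline matches the paper's proof essentially step for step: Gessel--Viennot to Determinant~I, minor expansion to the discrete Selberg sum, Cauchy--Binet to the size-$m$ Askey--Wilson determinant, then two applications of the Christoffel--Heine identity (with the quadratic change of variable $y=2x^2-1$ producing the base-$q^2$ system) to reach Determinant~III of size $|M|+|N|$, with Lemma~\ref{pl} handling odd $m$ and analytic continuation handling the out-of-range Askey--Wilson parameters.

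Two small corrections to your narrative. First, the reason the Gessel--Viennot determinant only equals $Z(q)$ for even $m$ has nothing to do with half-integer heights (those occur for all $m$); it is a sign issue in Lindstr\"om's lemma. The paths emanating from the $m$ points on $T$ must end at $m$ \emph{consecutive} points on the side $b+m$, so the permutations that survive in \eqref{li} are exactly those that permute these $m$ paths among themselves, and such permutations are all even only when $m$ is even. For odd $m$ the determinant computes a signed enumeration, not $Z(q)$. Second, the $(M,N)\mapsto(-M,-N)$ symmetry is not available as an a priori check: it is a nontrivial \emph{consequence} of Theorem~\ref{mt} (recorded as a corollary) with no known direct combinatorial explanation, so you cannot use it to pin down signs in $C$.
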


As an example, consider the case $(M,N)=(2,0)$. 
Assuming also that $n$ is odd, we have
$$Q^{2,0,n}(\alpha,\beta,\gamma;q)
=\left|\begin{matrix}Q_{11} &Q_{12}\\Q_{21} & Q_{22}\end{matrix}\right|, $$
where
{\allowdisplaybreaks
\begin{align*}
Q_{11}&=(\alpha-\alpha^{-1})\frac{(\alpha^2q^2,\alpha^2\gamma^2;q^2)_{(n-1)/2}}{q^{\frac 14(n-1)^2}\alpha^{n-1}\gamma^{\frac 12(n-1)}},\\
Q_{12}&=(\alpha q-\alpha^{-1}q^{-1})\frac{(\alpha^2q^2,\alpha^2\gamma^2;q^2)_{(n-1)/2}(1-\alpha^2\beta^2)}{q^{\frac 14(n-1)^2}\alpha^{n}\beta\gamma^{\frac 12(n-1)}}\\
&\quad\times
\left(1+\frac{(1-q^{1-n})(1-\alpha^2\beta^2\gamma^2q^{n-1})(1-\alpha^4q^2)(1-q^{-2})}{(1-q^2)(1-\alpha^2q^2)(1-\alpha^2\beta^2)(1-\alpha^2\gamma^2)}\,q^2\right)
,\\
Q_{21}&=\frac{(\alpha^2,\alpha^2\gamma^2;q^2)_{(n+1)/2}}{q^{\frac 14(n+1)(n-1)}\alpha^{n+1}\gamma^{\frac 12(n+1)}},\\
Q_{22}&=\frac{(\alpha^2,\alpha^2\gamma^2;q^2)_{(n+1)/2}(1-\alpha^2\beta^2)}{q^{\frac 14(n+1)(n-1)}\alpha^{n+2}\beta\gamma^{\frac 12(n+1)}}\\
&\quad\times
\left(1+\frac{(1-q^{-1-n})(1-\alpha^2\beta^2\gamma^2q^{n-1})(1-\alpha^4q^2)(1-q^{-2})}{(1-q^2)(1-\alpha^2)(1-\alpha^2\beta^2)(1-\alpha^2\gamma^2)}\,q^2\right)
.\\
\end{align*}
}
This can be simplified to
\begin{multline*}Q^{2,0,n}(\alpha,\beta,\gamma;q)=\frac{(1-q)(\alpha^2;q^2)_{(n+1)/2}(\alpha^2q^2,\alpha^2\gamma^2,\alpha^2\gamma^2q^2;q^2)_{(n-1)/2}}{q^{\binom n2+1}\alpha^{2n+2}\beta\gamma^n}\\
\times\big\{(1+\alpha^2q)(1-\alpha^2\beta^2)(1-\alpha^2\gamma^2)-(1+q^{-n})(1-\alpha^2\beta^2\gamma^2q^{n-1})(1-\alpha^4q^2)\big\}. \end{multline*}
If  $(\alpha,\beta,\gamma)=(q^{x/2},\pm q^{y/2},q^{z/2})$, the leading Taylor coefficient of this function at $q=1$ is a completely factored expression times
$$(x+y)(x+z)-2(x+1)(x+y+z+n-1)$$
Substituting $(x,y,z,n)\mapsto (-b-c-m-1,a+c+m+1,m+1,b)$ we find that, if $a$, $b$ and $c$ are all odd and $(M,N)=(2,0)$, then 
$Z(1)$ is a completely factored expression times
$$(b-a)(b+c)+2(b+c+m)(a+m)=(a+b)(b+c)+2m(a+b+c+m). $$
After interchanging $a$ and $b$, we recover the second half of
 \cite[Conj.\ 1]{cekz}.
In this way, \cite[Conj.\ 1 and Conj.\ 2]{cekz} can both be obtained as 
special cases of Theorem \ref{mt}.

An intriguing consequence of Theorem \ref{mt} is that $Z(q)$ is invariant under
the transformation $(M,N)\mapsto(-M,-N)$, up to an elementary prefactor. 
This means that the position of $T$ is reflected in the center of $H$, see Figure \ref{symmfig}.  It would  be interesting to have a conceptual explanation for this unexpected symmetry.

\begin{corollary}
Denoting by $Z_{MN}$ the partition function $Z(q)$ with fixed values of $a$, $b$, $c$, $m$ and $q$, we have
\begin{align*}\frac{Z_{MN}}{Z_{-M,-N}}&=\frac 1{2^{m(M+N)}}
\frac{H_q\left(\frac{a+b+M+N}2,\frac{a+b-M-N}2+m\right)}{H_q\left(\frac{a+b-M-N}2,\frac{a+b+M+N}2+m\right)}\\
&\quad\times\frac{H_{q^2}\left(\frac{a+c-N}2,\frac{a+c+N}2+m,\frac{b+c-M}2,\frac{b+c+M}2+m\right)}{H_{q^2}\left(\frac{a+c+N}2,\frac{a+c-N}2+m,\frac{b+c+M}2,\frac{b+c-M}2+m\right)}. \end{align*}
\end{corollary}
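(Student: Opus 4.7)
The plan is to deduce the corollary directly from Theorem \ref{mt} by identifying the few asymmetric factors in \eqref{mti}, without revisiting the combinatorial setup at all.

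First I would observe that the determinant $Q^{|M|,|N|,b}(\alpha,\beta,\gamma;q)$ in \eqref{mti}, together with its arguments $\alpha = q^{(1-b-c-m-|M|)/2}$, $\beta = -\varepsilon q^{(1+a+c+m-|N|)/2}$, $\gamma = q^{(m+1)/2}$, depends on $(M,N)$ only through $|M|$, $|N|$, and $\varepsilon = \sgn(MN)$. All three of these quantities are manifestly invariant under $(M,N)\mapsto(-M,-N)$, so the determinant factor cancels in the ratio and
$$\frac{Z_{MN}}{Z_{-M,-N}}=\frac{C_{MN}}{C_{-M,-N}},$$
where $C_{MN}$ denotes the prefactor \eqref{mtc} for the given values of $M$ and $N$.

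Next I would scan the formula \eqref{mtc} line by line and distinguish factors that depend on $(M,N)$ only through $(|M|,|N|,\varepsilon)$ from those that depend genuinely on the signs. Almost every factor is of the first kind: all the $H_{q^2}$-, $H_{q^4}$-, $H_q^-$- and $H_q^{-\varepsilon}$-terms whose arguments involve only $[a/2]$, $[b/2]$, $(c+|M|)/2$, $|M|+|N|$, or $|(a-b\pm M\mp N)/2|$ (the last pair being swapped by the substitution, but their product symmetric). The asymmetric factors reduce to exactly four: the power of two $2^{-\frac12 m(a+b+M+N)}$; the reciprocal $1/H_q\bigl(\tfrac{a+b-M-N}{2},\tfrac{a+b+M+N}{2}+m\bigr)$; the ratio $H_{q^2}\bigl(\tfrac{a+c+N}{2}+m\bigr)/H_{q^2}\bigl(\tfrac{a+c+N}{2}\bigr)$; and the ratio $H_{q^2}\bigl(\tfrac{b+c-M}{2}\bigr)/H_{q^2}\bigl(\tfrac{b+c-M}{2}+m\bigr)$. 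The remaining apparent asymmetry, the sign $\varepsilon^{N(b+M)}$, is illusory: under $(M,N)\mapsto(-M,-N)$ its exponent changes by $2Nb$, and $\varepsilon^{2Nb}=1$.

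Finally I would divide each of these four asymmetric factors by its image under $(M,N)\mapsto(-M,-N)$ and take the product. The first gives $2^{-m(M+N)}$; the second gives the $H_q$-ratio in the statement; the third and fourth combine, using that $H_{q^2}$ of several arguments denotes a product, into the $H_{q^2}$-ratio in the statement. The main obstacle is nothing more than the mechanical verification that each of the many $H$-factors in \eqref{mtc} that I claim to be symmetric is indeed invariant under the substitution; this is pure bookkeeping, with no hidden cancellations and no real mathematical content beyond what is already encoded in Theorem \ref{mt}.
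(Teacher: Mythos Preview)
Your proposal is correct and is exactly the approach the paper intends: the corollary is stated as an immediate consequence of Theorem~\ref{mt}, and the only work is the bookkeeping you outline---checking that the determinant and almost all factors of \eqref{mtc} depend on $(M,N)$ only through $|M|$, $|N|$, $\varepsilon$, and then reading off the four asymmetric pieces. Your handling of the sign $\varepsilon^{N(b+M)}$ and of the factor $H_q^-\bigl(|(a-b+M-N)/2|,|(a-b-M+N)/2|\bigr)$ (whose two arguments are swapped but whose product is unchanged) is correct.
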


\begin{figure}
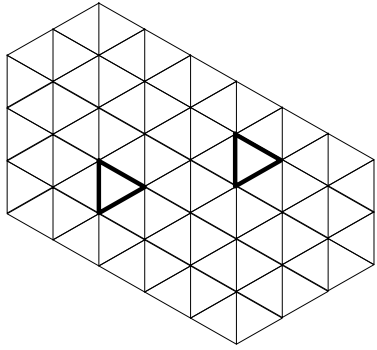
 
\centertexdraw{
\drawdim cm \setunitscale.7
\linewd.01
\rhombus \rhombus \rhombus \rhombus \rhombus \rhombus 
\ldreieck
 \move(-0.866025 -.5)
 \rhombus \rhombus \rhombus \rhombus \rhombus \rhombus \rhombus 
\ldreieck
 \move(-0.866025 -.5)
 \rdreieck \rhombus \rhombus \rhombus \rhombus \rhombus\rhombus\rhombus
\move(-0.866025 -1.5)
 \rdreieck \rhombus \rhombus \rhombus \rhombus \rhombus\rhombus
\move(-0.866025 -2.5)
 \rdreieck \rhombus \rhombus \rhombus \rhombus \rhombus
  \linewd.08
\move(0.866025 -2.5)\rdreieck
\move(3.464102 -2)\rdreieck
 }
\caption{Removing one of the two indicated triangles leads to partition functions related by an elementary multiplier.}
\label{symmfig}
\end{figure}

As an example, removing the left triangle in Figure \ref{symmfig}
corresponds to 
$$(a,b,c,m,M,N)=(2,5,2,1,1,2),$$
which gives
$$Z(q)=\frac{(1+q)^4(1+q^2)^5(1+q^3)^3(1+q^4)^5(1+q^5)}{2^{13}q^{28}}\,f(q), $$
with
$$f(q)=q^8+q^7+2q^6+3q^5+3q^4+3q^3+2q^2+q+1. $$
For the right triangle, corresponding to $(a,b,c,m,M,N)=(2,5,2,1,-1,-2)$, 
$$Z(q)=\frac{(1+q)^3(1+q^2)^4(1+q^3)^2(1+q^4)^4(1-q^{10})}{2^{10}q^{25}(1-q)}\,f(q). $$
In particular, substituting $q=1$ we find that there are $544=2^5\cdot 17$ tilings in the first case and $1360=2^4\cdot 5\cdot 17$ in the second case, where 
the symmetry is responsible for
 the relatively large common prime factor $f(1)=17$.

\section{Proof of Theorem \ref{mt}}

\subsection{Lattice paths}\label{gvs}

Following \cite{cekz}, we  study the partition function $Z(q)$ by applying
a bijection from lozenge tilings to families of non-intersecting  paths in the square lattice.
Given a tiling of $H\setminus T$, we mark the midpoints of the edges on 
the side  $b+m$ 
  and construct paths ending at these points by
  following the direction of the lozenges. 
This gives $m$ paths starting at the adjacent side of $T$ and
 $b$ paths starting at the side $b$.
 We then apply an affine transformation mapping
the steps of the paths to edges in the square lattice.

\begin{figure}
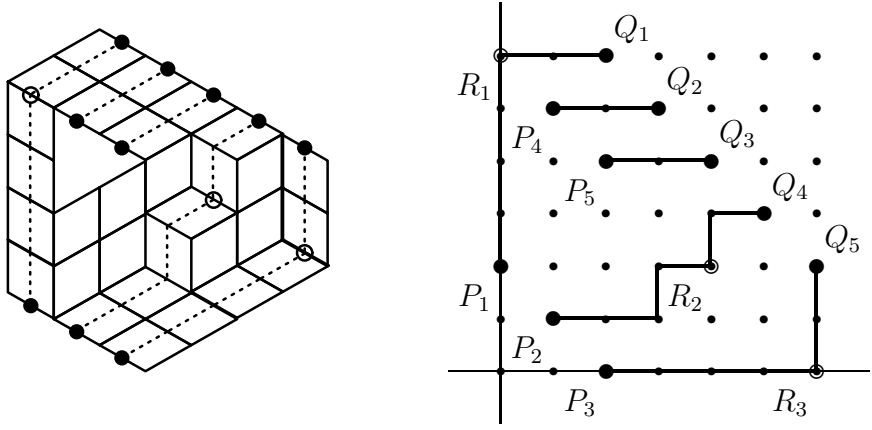
 
\hspace*{-4cm}
\begin{texdraw}
\drawdim cm \setunitscale.7
  \linewd.05
 \move(0 0)
 \RhombusA  \RhombusA \RhombusA \RhombusA \RhombusB \RhombusB \RhombusA
 \move(-.866025 -.5)
  \RhombusA \RhombusA \RhombusA \RhombusB \RhombusA \RhombusB 
  \RhombusA 
  \move(0.866025 -2.5)
\RhombusB \RhombusB \RhombusA \RhombusA
\move(0 -3)
\RhombusB \RhombusB \RhombusA \RhombusA
 \move(-.866025 -0.5) \RhombusC 
 \move(-.866025 -1.5) \RhombusC 
 \move(-.866025 -2.5) \RhombusC 
 \move(-.866025 -3.5) \RhombusC 
 \move(4.330127 -1.5) \RhombusC 
\move(4.330127 -2.5) \RhombusC 
\ringerl(1.29904 0.25)
\odSchritt \odSchritt \vdSchritt\vdSchritt\vdSchritt\vdSchritt
\ringerl(2.16506 -0.25)
\odSchritt\odSchritt
\ringerl(3.03109 -0.75)
\odSchritt\odSchritt
\ringerl(3.89711 -1.25)
\odSchritt\vdSchritt\odSchritt\vdSchritt\odSchritt\odSchritt
\ringerl(4.76314 -1.75)
\vdSchritt\vdSchritt\odSchritt\odSchritt\odSchritt\odSchritt
\ringerl(0.43301 -1.25)
\ringerl(1.29904 -1.75)
\ringerl(-0.43301 -4.75)
\ringerl(0.43301 -5.25)
\ringerl(1.29904 -5.75)
\move(-0.43301 -.75)\lcir r:.15
\move(3.03109 -2.75)\lcir r:.15
\move(4.76314 -3.75)\lcir r:.15
\end{texdraw}
\hspace{2cm}
$
\Einheit=.7cm
\Gitter(7,7)(0,0)
\Koordinatenachsen(7,7)(0,0)
\Pfad(0,2),222211\endPfad
\Pfad(1,1),112121\endPfad
\Pfad(2,0),111122\endPfad
\Pfad(1,5),11\endPfad
\Pfad(2,4),11\endPfad
 \DickerPunkt(0,2) \DickerPunkt(1,1) \DickerPunkt(2,0) \DickerPunkt(1,5) \DickerPunkt(2,4) 
 \DickerPunkt(2,6) \DickerPunkt(3,5) \DickerPunkt(4,4) \DickerPunkt(5,3) \DickerPunkt(6,2)
\Kreisbig(0,6)\Kreisbig(4,2)\Kreisbig(6,0)
 \Label\lu{P_1}(0,2) 
 \Label\lu{P_2}(1,1)
 \Label\lu{P_3}(2,0)
\Label\lu{P_4}(1,5)\Label\lu{P_5}(2,4)
\Label\lu{R_1}(0,6)\Label\lu{R_2}(4,2)\Label\lu{R_3}(6,0)
\Label\ro{Q_1}(2,6)\Label\ro{Q_2}(3,5)\Label\ro{Q_3}(4,4)\Label\ro{Q_4}(5,3)\Label\ro{Q_5}(6,2)
$
\caption{Lattice paths corresponding to the tiling in Figure \ref{tiling}.
The circled points, denoted $R_j$,  will appear in \S \ref{os}.}
\label{lpfig}
\end{figure}

More precisely, with the conventions illustrated in Figure \ref{lpfig}, 
tilings of $H\setminus T$ are in bijection with families of up-right non-interesecting paths starting at the points $(P_j)_{j=1}^{b+m}$ and ending at the points  $(Q_j)_{j=1}^{b+m}$, where 
\begin{subequations}\label{sep}
\begin{align}
P_j&=\begin{cases} (j-1,b-j), & 1\leq j\leq b,\\
(C-b+j-1,A+b+m-j), & b+1\leq j\leq b+m,\end{cases}\\
Q_j&=(a+j-1,b+c+m-j), \qquad 1\leq j\leq b+m.
\end{align}
\end{subequations}
In this setting, the weight function becomes a weight on horizontal steps, 
given by 
\begin{equation}\label{sw}\frac{q^{(x+2y-Z)/2}+q^{(-x-2y+Z)/2}}{2} \end{equation}
for the step from $(x-1,y)$ to $(x,y)$, where
$$Z=2A+C+m-1. $$

We now recall the Gessel--Viennot method for weighted enumeration of lattice paths \cite{gv}. 
Consider,  in general,  an arbitrary weight 
 assigned to each horizontal edge in 
the square lattice. 
We define the weight of a family of paths
to be the product of the weights of all horizontal steps in the family.
Let
$$w(P_1,\dots,P_n;Q_1,\dots,Q_n)$$ 
denote the sum of the weight of all 
families of $n$ non-intersecting up-right lattice paths, where 
the $i$:th path starts at
 $P_i$ and ends at $Q_i$, for $1\leq i\leq n$.
We then have the following fundamental result of Lindstr\"om \cite{l}, see 
\cite{kr2} for a historical discussion.

\begin{lemma}[Lindstr\"om]\label{ll}
The following identity holds:
\begin{equation}\label{li}\det_{1\leq j,k\leq n}\big(w(P_j;Q_k)\big)=\sum_{\sigma\in S_n}\sgn(\sigma)w(P_1,\dots,P_n;Q_{\sigma(1)},\dots,Q_{\sigma(n)}).\end{equation}
\end{lemma}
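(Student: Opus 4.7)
The plan is to prove Lindström's lemma via the classical sign-reversing involution on families of lattice paths. First I would expand the determinant on the left using the Leibniz formula:
\begin{equation*}
\det_{1\leq j,k\leq n}\bigl(w(P_j;Q_k)\bigr)=\sum_{\sigma\in S_n}\sgn(\sigma)\prod_{j=1}^n w(P_j;Q_{\sigma(j)}).
\end{equation*}
Because $w(P_j;Q_{\sigma(j)})$ is the weighted sum over all up-right paths from $P_j$ to $Q_{\sigma(j)}$, the product expands into a weighted sum over all $n$-tuples of paths (possibly intersecting) with the $j$-th path going from $P_j$ to $Q_{\sigma(j)}$. Thus the left-hand side becomes a signed weighted sum over all pairs $(\sigma,\mathcal{P})$, where $\mathcal{P}=(\pi_1,\dots,\pi_n)$ is a family of up-right paths whose endpoints are permuted by $\sigma$.

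Next I would split this sum into the contribution from non-intersecting families and the contribution from families having at least one intersection. The first contribution is exactly the right-hand side of the desired identity, because a non-intersecting family with $P_j\to Q_{\sigma(j)}$ can only exist for certain permutations $\sigma$, but in any case they are packaged into the definition of $w(P_1,\dots,P_n;Q_{\sigma(1)},\dots,Q_{\sigma(n)})$ with the correct sign. So the task reduces to showing that the intersecting contribution vanishes.

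To kill the intersecting contribution I would construct a weight-preserving, sign-reversing involution on the set of intersecting pairs $(\sigma,\mathcal{P})$. The standard recipe is: given $(\sigma,\mathcal{P})$, let $(i,j)$ with $i<j$ be the smallest pair (in lexicographic order) for which $\pi_i$ and $\pi_j$ share a lattice point, and let $X$ be the earliest such shared point along $\pi_i$ (say, using a canonical order on lattice points). Swap the portions of $\pi_i$ and $\pi_j$ after $X$: this produces a new family $\mathcal{P}'$ together with the new permutation $\sigma'=\sigma\circ(i\,j)$, so that $\sgn(\sigma')=-\sgn(\sigma)$. The swap leaves the multiset of horizontal edges of the family unchanged, hence preserves the weight. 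One checks that applying the procedure twice returns the original pair, so it is an involution, and that the rule for selecting $(i,j,X)$ does not change under the swap.

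The main obstacle, and the only place requiring genuine care, is verifying that the involution is well defined: namely, that the canonical choice of $(i,j,X)$ is intrinsic to the underlying edge set of the family and is unaffected by the swap. This is where one must fix the ordering conventions (lexicographic on pairs, and a fixed total order on lattice points) and check that the swap does not create an earlier intersection or relabel the offending pair. Once this bookkeeping is in place, the intersecting contributions cancel in pairs and the identity \eqref{li} follows immediately.
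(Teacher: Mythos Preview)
The paper does not actually prove Lemma~\ref{ll}; it quotes it as Lindstr\"om's classical result, with a reference to \cite{l} and to \cite{kr2} for history. Your outline is the standard sign-reversing involution argument, and in broad strokes it is correct.

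There is, however, a genuine issue with the specific convention you propose. You choose the lexicographically smallest pair $(i,j)$ first and \emph{then} pick the earliest common point $X$ of $\pi_i$ and $\pi_j$. With this ordering the map is generally \emph{not} an involution: after swapping tails at $X$, the new $\pi_i'$ acquires the tail of the old $\pi_j$, which may intersect some $\pi_k$ with $i<k<j$ that the old $\pi_i$ avoided; then $(i,k)$ becomes the new lex-smallest intersecting pair and a second application of your rule does not undo the first. Concretely, take
\[
\pi_1:(0,2)\to(1,2)\to(1,4),\quad
\pi_2:(2,0)\to(2,3)\to(4,3),\quad
\pi_3:(0,0)\to(0,3)\to(3,3).
\]
Here $\pi_1\cap\pi_2=\varnothing$, $\pi_1\cap\pi_3=\{(0,2)\}$, so your rule gives $(i,j)=(1,3)$, $X=(0,2)$. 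After the tail swap, the new $\pi_1'$ passes through $(2,3)$ and $(3,3)$, hence meets $\pi_2$, and the lex-smallest intersecting pair becomes $(1,2)$.

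The fix is to reverse the order of the two choices: first take the smallest $i$ whose path meets another, then take $X$ to be the \emph{first point on $\pi_i$ lying on any other path}, and only then take $j$ to be the smallest index $\neq i$ with $X\in\pi_j$. With this convention the prefix of $\pi_i$ up to $X$ is disjoint from every other path, so after the swap $i$, $X$, and $j$ are all unchanged, and the map is a weight-preserving sign-reversing involution as desired. Once you make this adjustment, your argument goes through.
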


In other words, in the Laplace expansion of the left-hand side, the contributions from intersecting lattice paths cancel.
Consider now the case when the points are given by \eqref{sep}. 
Since the paths starting at $T$ will end at consecutive points
on the side $b+m$, not all permutations will contribute to the sum in
\eqref{li}. In particular, if $m$ is even, only even permutations
contribute, which means that the right-hand side of \eqref{li} reduces to
our partition function.

\begin{corollary}\label{pdc} If the side length $m$ of the triangle $T$ is even, the partition function \eqref{z} has the determinant representation
\begin{equation}\label{pdi}Z(q)=\det_{1\leq j,k\leq b+m}\big(w(P_j;Q_k)\big). 
\end{equation}
\end{corollary}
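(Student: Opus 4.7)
The plan is to apply Lindström's lemma (Lemma~\ref{ll}) directly to the starting points $P_j$ and ending points $Q_k$ from \eqref{sep}, and then to verify that when $m$ is even every permutation $\sigma$ with a non-empty non-intersecting family set has $\sgn(\sigma)=+1$. Lemma~\ref{ll} gives
\[
\det_{1\leq j,k\leq b+m}\bigl(w(P_j;Q_k)\bigr)=\sum_{\sigma\in S_{b+m}}\sgn(\sigma)\,w(P_1,\ldots,P_{b+m};Q_{\sigma(1)},\ldots,Q_{\sigma(b+m)}),
\]
while the bijection between tilings and non-intersecting lattice path families, combined with the weight translation that produces \eqref{sw}, gives the analogous \emph{unsigned} identity $Z(q)=\sum_{\sigma}w(P_1,\ldots,P_{b+m};Q_{\sigma(1)},\ldots,Q_{\sigma(b+m)})$. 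The two expressions differ only by the signs, so it suffices to show that $\sgn(\sigma)=+1$ on every $\sigma$ that contributes.

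Because of the geometry of \eqref{sep}, any such $\sigma$ is highly constrained: $P_1,\ldots,P_b$ lie in increasing $x$-order on a single anti-diagonal, $P_{b+1},\ldots,P_{b+m}$ do likewise on another anti-diagonal, and the $Q_j$ on a third. The classical non-crossing argument (two up-right paths whose starting points share an anti-diagonal and whose endpoints share an anti-diagonal can be non-intersecting only if the endpoints occur in the same order as the starting points) then forces $\sigma$ to be order-preserving on each of $\{1,\ldots,b\}$ and $\{b+1,\ldots,b+m\}$. Hence $\sigma$ is the shuffle determined by the subset $T=\sigma(\{b+1,\ldots,b+m\})$.

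The main obstacle is the geometric claim that $T$ must consist of \emph{consecutive} integers, $T=\{k_0,k_0+1,\ldots,k_0+m-1\}$ for some $k_0$. The idea is that the triangle starting points $P_{b+1},\ldots,P_{b+m}$ form an unbroken length-$m$ block on their anti-diagonal $L$, and a non-intersecting path from $P_j$ with $j\le b$ cannot pass through these vertices, so it must cross $L$ strictly to the left or strictly to the right of the block. Paths crossing on the left can only reach $Q_k$ with small index $k$, those crossing on the right only $Q_k$ with large index; hence the $Q$-indices used by the bottom paths split into an initial and a terminal segment, forcing the complementary $T$ to be a consecutive block. I expect the rigorous formulation of this separation statement to require some care, perhaps tracking which lattice edges are blocked by each triangle path near $L$, or invoking planarity in the simply connected affine image of $H\setminus T$.

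Given consecutivity of $T=\{k_0,\ldots,k_0+m-1\}$, the inversions $(i,j)$ of $\sigma$ (pairs with $i<j$ and $\sigma(i)>\sigma(j)$) can only arise from $i\le b<j$ with $\sigma(i)\ge k_0+m$ and $\sigma(j)\le k_0+m-1$, of which there are exactly $(b+1-k_0)m$. For $m$ even this number is even, so $\sgn(\sigma)=+1$ on every contributing $\sigma$, and the Lindström expansion reduces to $Z(q)$, proving \eqref{pdi}.
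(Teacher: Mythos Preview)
Your proof is correct and follows exactly the approach the paper takes: apply Lindstr\"om's lemma, observe that the triangle paths must end at a block of consecutive $Q_k$'s, and deduce that all contributing permutations are even when $m$ is even. The consecutivity step you flag as delicate is in fact routine---once a bottom path crosses the anti-diagonal $L$ through $P_{b+1},\dots,P_{b+m}$ strictly left (resp.\ right) of that block, comparing it with the leftmost (resp.\ rightmost) triangle path along each subsequent anti-diagonal forces its endpoint index to be below $\sigma(b+1)$ (resp.\ above $\sigma(b+m)$), and counting then pins $\sigma(b+m)-\sigma(b+1)=m-1$.
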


If $m$ is odd, we get instead a determinant representation for 
a sign-variation of the
partition function, generalizing
 the $(-1)$-enumeration studied in \cite{cekz}. Although
our methods can be adapted to study this function, it will not be
considered in the present work.

\subsection{The partition function as a discrete Selberg integral}
\label{dss}

Continuing in the footsteps of Ciucu et al.\ \cite{cekz}, 
we rewrite the determinant of Corollary~\ref{pdc} 
as a discrete Selberg integral.
The following result is
 a  special case of \cite[Prop.~2.1~(c)]{sc2}.

\begin{lemma}[Schlosser]\label{sdl}
One has the determinant evaluation
\begin{align*}\nonumber&\det_{1\leq j,k\leq m}\left(w\big((x_1+j-1,y_1+m-j);(x_2+l_k,y_2-l_k)\big)\right)\\
&\nonumber\quad={(-1)^{\binom m2}2^{\binom m2-m(x_2-x_1)-|l|}}q^{X}\Delta(q^l)\\
&\quad\quad\times\prod_{j=1}^m\frac{(q;q)_{x_2+y_2-x_1-y_1-m+j}(-q^{Z-x_2-y_1-y_2-m+j};q)_{x_2-x_1-j+1+l_j}}{(q;q)_{x_2-x_1+l_j}(q;q)_{y_2-y_1-l_j}}, \end{align*}
where $|l|=\sum_{j=1}^m l_j$
and
\begin{align*}X&=-\frac 32\binom m3+\frac 12\binom m2(- 3x_1+2x_2-2y_1+Z-1)\\
&\quad+\frac 14\,m(x_2-x_1)(x_1+x_2+4y_1-2Z+1)\\
& \quad+\frac 12\sum_{j=1}^m\binom{l_j}2+\frac 12\,|l|(x_2+2y_1-Z+1).\end{align*}
\end{lemma}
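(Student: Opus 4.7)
The plan is to prove this lemma, cited as a special case of Schlosser's general determinant identity \cite{sc2}, in two conceptual steps: first evaluate each matrix entry $w(P_j;Q_k)$ in closed product form, then recognize the resulting matrix as having Vandermonde-type structure that can be evaluated directly. The key observation is that, although $w(P;Q)$ is a priori a sum over up-right lattice paths, the Chebyshev-like horizontal-step weight $\tfrac12(q^{h/2}+q^{-h/2})$ with $h=x+2y-Z$ admits a single-product evaluation after summation over all paths, rather than just a finite sum of $q$-monomials.

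For the first step, I would parameterize a path from $P_j=(x_1+j-1,y_1+m-j)$ to $Q_k=(x_2+l_k,y_2-l_k)$ by the numbers $(v_0,\dots,v_{A_{jk}})$ of vertical steps in the successive gaps between its $A_{jk}=x_2+l_k-x_1-j+1$ horizontal steps, so that the level $h$ at the $i$-th horizontal step equals $h_{P_j}+i+2(v_0+\cdots+v_{i-1})$. Expanding the product of Chebyshev weights into a $2^{A_{jk}}$-fold sum over sign choices $e\in\{\pm1\}^{A_{jk}}$, the inner sum over the gap sizes $v_i$ evaluates via the $q$-binomial theorem (equivalently, as a complete homogeneous symmetric polynomial in geometric arguments) to a ratio of $q$-Pochhammer symbols, and the outer sign sum collapses by a terminating $q$-Chu--Vandermonde summation. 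The outcome is the single-product closed form for $w(P_j;Q_k)$, featuring exactly the factor $(-q^{Z-x_2-y_1-y_2-m+j};q)_{x_2-x_1-j+1+l_k}$ that appears in the statement.

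For the second step, one checks that as a function of $l_k$ with $j$ fixed, the entry $w(P_j;Q_k)$ factors as a $k$-independent row prefactor times a $j$-independent column factor times a polynomial in $q^{l_k}$ of degree $j-1$ whose leading coefficient is a fixed monomial in $q$. Pulling out the row and column factors from the determinant reduces it to a Vandermonde-type determinant $\det_{j,k}\bigl(p_{j-1}(q^{l_k})\bigr)$, which evaluates to $\Delta(q^l)$ times the product of leading coefficients; this accounts for the $\Delta(q^l)$ factor in the lemma. The main technical obstacle is the bookkeeping of the overall $q$-exponent $q^X$, which combines quadratic-in-$j$ contributions from the row prefactors, the product of leading coefficients, and the Vandermonde normalization; verifying that they sum to the explicit polynomial $X$ in the statement is routine but tedious. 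A cleaner route, which is the one Schlosser effectively takes in \cite{sc2} and which I would also prefer in practice, is to check the identity at a normalizing specialization (say $m=1$) to fix the prefactor, and then extend by a polynomial-interpolation or induction argument in $m$ — or simply to cite \cite{sc2} verbatim, as is done here.
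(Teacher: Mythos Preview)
The paper does not actually prove this lemma; it simply cites \cite[Prop.~2.1(c)]{sc2} and remarks that the exponent $X$ can be pinned down by the $q^{1/2}\mapsto q^{-1/2}$ symmetry. Your two-step plan (closed product form for each entry $w(P_j;Q_k)$, then a Vandermonde-type evaluation of the resulting matrix) is exactly Schlosser's strategy in \cite{sc2}. However, both steps as you describe them contain genuine gaps.

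For Step~1, your sign-expansion approach breaks down. After expanding the Chebyshev weights over $e\in\{\pm1\}^{A}$ and summing over the gap sizes $v_i$, the inner sum is indeed $h_B(q^{E_0},\dots,q^{E_A})$ with $E_j=\sum_{i>j}e_i$; but the arguments $q^{E_j}$ are \emph{not} in geometric progression once the signs $e_i$ are mixed (the $E_j$ form a $\pm1$ walk, with repetitions), so the $q$-binomial theorem does not apply and the inner sum is not a ratio of $q$-Pochhammer symbols. For example, with two horizontal steps and $e=(+,-)$ one gets $h_B(1,q^{-1},1)$, which for $B=1$ equals $2+q^{-1}$, not a product. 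Consequently the outer sum is not a ${}_2\phi_1$ and there is no Chu--Vandermonde collapse. Schlosser's actual argument for the single-path formula is to verify that the product ansatz satisfies the Pascal-type recursion $w(P;(c,d))=W(c,d)\,w(P;(c-1,d))+w(P;(c,d-1))$, together with the obvious boundary values.

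For Step~2, the ``degree $j-1$'' claim is not right either. Once the genuine row and column factors are extracted (including the separable $(-q^{\gamma_j};q)_{A_{jk}}$, whose start depends only on $j$ and whose end only on $k$), what remains is essentially $(q^{\,\cdot}t_k;q)_{j-1}(q^{\,\cdot}t_k^{-1};q)_{m-j}$ with $t_k=q^{l_k}$: a Laurent polynomial of total width $m-1$ in each $t_k$, not of degree $j-1$. The determinant therefore does not reduce to a plain Vandermonde by your stated mechanism; it is evaluated via the Krattenthaler-type lemma that the paper itself invokes a few lines later as \eqref{ksd} (citing \cite{kr,sc1}), which handles precisely matrices with entries of the form $(x_kq^{j-1};q)_{\,\cdot}(x_kq^{m-j};q)_{\,\cdot}$. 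With that lemma in hand the evaluation and the bookkeeping of the prefactor go through as you indicate.
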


The exponent $X$ looks complicated, but can be determined from the fact that
the weight is invariant under $q^{1/2}\mapsto q^{-1/2}$.
If we would use a more symmetric notation (based on $q$-numbers $q^{-a/2}-q^{a/2}$ rather than $1-q^a$), the resulting expression would be simpler.

Let us now expand
the determinant in Corollary \ref{pdc} into minors according to
\begin{align*}\det_{1\leq j,k\leq b+m}\big(w(P_j;Q_k)\big)
&=\sum_{0\leq l_1<\dots<l_m\leq b+m-1}(-1)^{|l|+\binom m2+bm}\\
&\quad\times\det_{1\leq j,k\leq b}\big(w(P_j;Q_{\hat l_k+1})\big)
\det_{1\leq j,k\leq m}\big(w(P_{b+j};Q_{l_k+1})\big),\end{align*}
where $\hat l_1<\dots<\hat l_b$ is the ordered complement of $\{l_1,\dots,l_m\}$
in $[0,b+m-1]$. The determinants on the right-hand side may be evaluated using Lemma \ref{sdl}.
 Rewriting all factors involving the indices $\hat l_j$ in terms of $l_j$,
using in particular
$$\Delta(q^{\hat l})=(-1)^{\binom b2+\binom m2}q^{\binom{b+m}3+(2-b-m)|l|+\sum_{j=1}^m\binom {l_j}2}
\frac{\prod_{j=1}^{b+m}(q;q)_{j-1}}{\prod_{j=1}^m(q;q)_{l_j}(q;q)_{b+m-1-l_j}}\,\Delta(q^{l})
 $$
 gives
\begin{multline*}\det_{1\leq j,k\leq b+m}\big(w(P_j;Q_k)\big)
=\frac {q^X}{2^{m(a+b-C)+ab}}\prod_{j=1}^{b+m}\frac{(q;q)_{j-1}(-q^{A-B-b+1};q)_{a+j-1}}{(q;q)_{a+j-1}(q;q)_{c+j-1}}\\
\times\prod_{j=1}^b\frac{(q;q)_{a+c+m+j-1}}{(-q^{A-B-b+1};q)_{j-1}}
\sum(-1)^{|l|+bm+\binom m2}q^{\sum_{j=1}^m\binom{l_j}2+|l|(A-b-m+2)}\Delta(q^l)^2\\
\times\prod_{j=1}^m\frac{(q;q)_{B+j-1}(q;q)_{a+l_j}(q;q)_{b+c+m-1-l_j}(-q^{-B-m+j};q)_{a-C-j+1+l_j}}{(q;q)_{l_j}(q;q)_{a-C+l_j}(q;q)_{b+m-1-l_j}(q;q)_{b+c+m-A-1-l_j}(-q^{A-B-b+1};q)_{a+l_j}},
\end{multline*}
where the sum is over indices satisfying
\begin{equation}\label{mi}\max(0,C-a)\leq l_1<\dots<l_m\leq \min(b+m-1,b+c+m-A-1)\end{equation}
and where
\begin{align*}
X&=\frac 12(B-A-c)\binom m2+\frac 14 \big((a-C)(a-C+1)+b(3b-4A-2C-1)\big)m\\
&\quad
+\frac 14\,ab(a+3b-4A-2C).
\end{align*}
In the notation
 \eqref{dc} and \eqref{mhs}, this identity can be expressed as follows, 
where we have rewritten the prefactor in a way that will be convenient later.

\begin{proposition}\label{dsp}
\begin{flalign}
\nonumber&\shoveleft{\det_{1\leq j,k\leq b+m}\big(w(P_j;Q_k)\big)}\\
\nonumber&
=\frac {q^X}{2^{\frac 12{m(a+b+M+N)}+ab}}
\prod_{j=1}^b\frac{(q;q)_{a+c+m+j-1}(q;q)_{j-1}(-q^{\frac 12(-a-b+M-N)+j};q)_{a}}{(q;q)_{a+j-1}(q;q)_{c+m+j-1}}\\
\nonumber&\quad\times
\prod_{j=1}^m\left(\frac{(q^{1-b-c-m},q^{1+\frac 12(a-b+M+N)},-q^{1+\frac 12(a-b+M-N)};q)_{b+j-1}}{(q^{1-b-m};q)_{j-1}(q^{2-2m-b-c+M};q^2)_{j-1}(q^{a+1};q)_{b+j-1}}\right.\\
\nonumber&\qquad\qquad\qquad\qquad\qquad\qquad{\times\left.\frac{(q^2;q^2)_{\frac 12(a+c+N)+j-1}}{(q;q)_{\frac{a+b+M+N}2+j-1}(q^2;q^2)_{\frac{b+c-M}2+j-1}}\right)}\\
&\quad\times {}_{4}\phi_3^{(m)}\left(\begin{matrix}q^{1-b-m},q^{a+1},q^{1-\frac b2-\frac c2+\frac M2-m},-q^{1-\frac b2-\frac c2+\frac M2-m}\\q^{1-b-c-m},q^{1+\frac a2-\frac b2+\frac M2+\frac N2},-q^{1+\frac a2-\frac b2+\frac M2-\frac N2}\end{matrix};q,q\right),
\label{wdds}\end{flalign}
where
\begin{align*}
X&=-2\binom m3-\frac 14(a-3b+M+N+4)\binom m2-\frac 14\,ab(2c+M-N)\\
&\quad-\left(\frac 12\,b(a-b)+\frac 1{16}(a-3b+M+N)(a+b+4c-3M+N+2)
\right)m,
\end{align*}
\end{proposition}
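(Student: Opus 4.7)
The plan is to apply a Laplace (minor) expansion to the $(b+m)\times(b+m)$ determinant in Corollary~\ref{pdc}, and then to evaluate the resulting pair of sub-determinants by two applications of Schlosser's Lemma~\ref{sdl}. Concretely, I would split the rows according to whether $j\leq b$ (paths starting on the side of length $b$) or $j>b$ (paths starting along $T$), and expand along the last $m$ rows. The expansion is then indexed by $m$-subsets $\{l_1<\dots<l_m\}$ of the column labels $\{0,1,\dots,b+m-1\}$, whose ordered complement I denote $\hat l_1<\dots<\hat l_b$, and carries a sign $(-1)^{|l|+\binom m2+bm}$ from the minor expansion.

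Next, I would apply Lemma~\ref{sdl} separately to the two sub-determinants. The starting points $P_1,\dots,P_b$ and $P_{b+1},\dots,P_{b+m}$ in \eqref{sep} each lie on a line of consecutive integer coordinates, so both meet the hypothesis of Schlosser's lemma directly (with the natural identifications of $x_1,y_1,x_2,y_2$). Applying it twice produces Vandermonde factors $\Delta(q^{\hat l})$ on the $b$-block and $\Delta(q^l)$ on the $m$-block, two copies of the cubic exponent $X$ of that lemma, and various products of $(q;q)$-Pochhammers indexed by $\hat l_j$ or $l_j$.

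To consolidate everything into a sum indexed by $l$ alone, I would use the standard complement identity, which turns $\Delta(q^{\hat l})$ into $\Delta(q^l)$ times a global factor $\prod_{j=1}^{b+m}(q;q)_{j-1}\big/\prod_{j=1}^m(q;q)_{l_j}(q;q)_{b+m-1-l_j}$ and an explicit power of $q$, and similarly rewrite every Pochhammer depending on $\hat l_j$ in terms of $l_j$ via the reflection formula for $(q;q)_n$. After substitution the summand factors cleanly into $\Delta(q^l)^2$ times a product $\prod_{j=1}^m F(l_j)$ of $q$-Pochhammer ratios, which is exactly of the form \eqref{mhs} defining a ${}_{4}\phi_3^{(m)}$ series; the final rewriting of parameters via \eqref{dc} then casts the answer in the stated shape.

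I expect the main obstacle to be bookkeeping rather than anything conceptual: the exponent $X$ in the proposition must absorb the two cubic exponents from Schlosser's lemma, the power of $q$ from the complement identity, and the contribution of the minor-expansion sign; likewise the prefactor in $2^{\bullet}$ together with the various $(q;q)$ products must be regrouped into the explicit products $\prod_{j=1}^b$ and $\prod_{j=1}^m$ appearing in \eqref{wdds}. I would also have to check that extending the summation range \eqref{mi} to the unconstrained range in \eqref{mhs} is justified by vanishing of the relevant Pochhammer factors outside \eqref{mi} (for instance, the numerator factor $(q^{1-b-m};q)_{k_j}$ kills any term with $k_j\geq b+m$). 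None of these checks introduce new ideas, but each must be carried out carefully in order to arrive at the precise exponent $X$ and precise product of $q$-Pochhammers stated in the proposition.
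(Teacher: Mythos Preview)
Your proposal is correct and follows essentially the same route as the paper: a Laplace expansion along the last $m$ rows with sign $(-1)^{|l|+\binom m2+bm}$, two applications of Lemma~\ref{sdl}, the complement identity for $\Delta(q^{\hat l})$ in terms of $\Delta(q^{l})$, and then a rewriting into the ${}_4\phi_3^{(m)}$ notation via \eqref{dc}. Your remark about the summation range is also handled in the paper exactly as you anticipate, via vanishing of the relevant Pochhammer factors.
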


Note that the factor $\prod_{j=1}^m{(q^{1+\frac 12(a-b+M+N)};q)_{b+j-1}}$
may vanish. If this is the case, the ${}_4\phi_3^{(m)}$
  in \eqref{wdds}  should be interpreted as a sum over indices
$l_j$ such that this zero is cancelled by 
$\prod_{j=1}^m{(q^{1+\frac 12(a-b+M+N)};q)_{l_j}^{-1}}$. This  
 gives the restriction 
 $l_1\geq(-a+b-M-N)/2=C-a$ as in \eqref{mi}.

\subsection{The partition function as a function of $m$}
\label{os}

Corollary \ref{pdc} is only valid for even values of $m$.
In order to study the case of odd $m$, we will need the following fact.
An analogous result for the enumeration $Z(1)$
was proved in \cite[\S 6]{cekz}. 
The simple proof given there seems difficult to extend to the weighted 
enumeration,  so we  use a slightly different approach.

\begin{lemma}\label{pl} For fixed values of $a$, $b$, $c$, $A$, $B$ and $C$,
\begin{equation}\label{pli}Z(q)=\prod_{j=1}^m\left(\frac{q^{\frac j2}+q^{-\frac j2}}2\right)^{a+b-C}f\left(q^{\frac m2}\right),\end{equation}
where $f$ is a rational function independent of $m$.
\end{lemma}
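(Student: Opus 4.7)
My plan is to give a second application of the Gessel--Viennot method, now to a modified lattice path model whose combinatorial structure is independent of $m$, so that the $m$-dependence of the resulting determinant is confined to the step weights.

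The original bijection in Section~\ref{gvs} uses $b+m$ paths whose endpoints depend on $m$, which is what produces the parity obstruction to Corollary~\ref{pdc}. To circumvent this I would use a different bijection, in which the $m$ paths that originate on the upper side of $T$ are extended backwards to a fixed set of $a+b-C$ auxiliary starting points on the boundary of $H$ (these are indicated by the circled points $R_j$ in Figure~\ref{lpfig}, whose appearance the caption attributes to this section). The extension region sits between $T$ and the short side of $H$ of length $c$ and consists of $m$ layers of forced horizontal lozenges, the $j$-th layer at (signed) height $j/2$ from the center of $T$ and containing $a+b-C$ lozenges. Its total weight is exactly the prefactor
$$\prod_{j=1}^m\left(\frac{q^{j/2}+q^{-j/2}}{2}\right)^{a+b-C}.$$

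After factoring this out, the remaining enumeration is of a family of non-intersecting lattice paths between endpoints whose relative positions are independent of $m$ when viewed in coordinates centered at $T$. Applying Lindstr\"om's lemma to the modified family (and checking that all contributing permutations have the same sign, uniformly in $m$) yields a determinantal expression whose entries count weighted paths between fixed sites. The horizontal step weight takes the form $\tfrac12(u^{-1}q^\alpha+uq^{-\alpha})$ with $u=q^{m/2}$ and $\alpha$ independent of $m$, while the number of steps on any path is bounded by a constant independent of $m$; therefore each matrix entry is a Laurent polynomial in $u$ with coefficients independent of $m$, and the determinant is a rational function $f(u)=f(q^{m/2})$ independent of $m$, which is the claim.

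The main obstacle will be the combinatorial construction of the modified bijection: correctly identifying the auxiliary starting points $R_j$ and the extension region, and then verifying (i) that the extension contributes exactly the stated prefactor and (ii) that the new LGV configuration is sign-coherent for all $m$. The reward is a single formula valid for both parities of $m$, which is what one needs to extend the analysis based on Proposition~\ref{dsp} (available only for even $m$) to the full range of parameters.
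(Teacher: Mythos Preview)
Your proposal rests on a geometric picture that does not hold. The circled points $R_j$ in Figure~\ref{lpfig} are not $a+b-C$ fixed auxiliary starting points on $\partial H$; they are $b$ \emph{variable} points on the line $L$ through the side of $T$ parallel to the side $b$, recording where the $b$ paths from the side $b$ happen to cross $L$ in a given tiling (in the figure $b=3$, whereas $a+b-C=4$). There is no parallelogram of $m(a+b-C)$ forced horizontal lozenges between $T$ and the side $c$: that region admits many tilings, as the figure already illustrates. More fundamentally, any lattice-path encoding of a tiling of $H\setminus T$ uses $b+m$ paths, one for each unit segment on the side $b+m$; you cannot pass through a forced region to a family whose cardinality and endpoint set are independent of $m$.

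What the paper does is closer to your instinct of cutting the picture, but the cut is along $L$ and one \emph{sums} over the crossing pattern $(R_1,\dots,R_b)$. Each summand is a product of two determinants, of sizes $b$ and $b+m$, and for both factors only the identity permutation contributes regardless of the parity of $m$, so this representation of $Z(q)$ is valid for all $m$. Both determinants are of Schlosser's type (Lemma~\ref{sdl}) and hence evaluate in closed form; the $m$-dependence is then extracted algebraically, factor by factor, using Lemma~\ref{hrl}. The exponent $a+b-C$ does not come from a single forced block but appears as $(a+l-C)+(b-l)$, the two determinants contributing $W^{a+l-C}$ and $W^{b-l}$ respectively, where $l$ is the number of crossings to one side of $T$. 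The sizes of the matrices still depend on $m$; it is only after the explicit evaluation that the quotient by $W^{a+b-C}$ is seen to be rational in $q^{m/2}$.
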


\begin{proof} Given a lozenge tiling of $H\setminus T$, we split
 $H$ in two parts, separated by the line $L$ containing the side
of $T$ parallel to the side $b$. Recall 
 the bijection to lattice paths 
described in \S \ref{gvs}. The paths starting at the side $b$  cross
$L$ at $b$ line-segments, which are marked with circles  in the left part of Figure \ref{lpfig}.
 Ordering them from left to right, let $x_j$ denote the distance from the $j$:th segment to $T$. If $l$ is the number of crossings to the left of $T$, the numbers 
$x_j$ are restricted by
\begin{subequations}\label{cdr}
\begin{align}
0&\leq x_l<\dots<x_1\leq\min(C-1,b+c-A-1),\\
0&\leq x_{l+1}<\dots<x_b\leq\min(A-1,a+b-C-1).
\end{align}
\end{subequations}

Applying the same affine map as in \S \ref{gvs}, the circled points
 are mapped to
$$R_j=\begin{cases}(C-x_j-1,A+x_j+m), & 1\leq j\leq l,\\
(C+x_j+m,A-x_j-1), & l+1\leq j\leq b.\end{cases} $$
The lattice paths split into a family of $b$ paths starting at $(P_j)_{j=1}^b$ and ending at $(R_j)_{j=1}^b$ and a family of
$b+m$ paths starting at $(S_j)_{j=1}^{b+m}$ and ending at  $(Q_j)_{j=1}^{b+m}$,
where
$$(S_1,\dots,S_{b+m})=(R_1,\dots,R_l,P_{b+1},\dots,P_{b+m},R_{l+1},\dots,R_b). $$
More explicitly, $S_j=(C+y_j,A+m-y_j-1)$, where
$$(y_1,\dots,y_{b+m})=(-x_1-1,\dots,-x_l-1,0,1,\dots,m-1,x_{l+1}+m,\dots,x_b+m). $$
Applying Lemma \ref{ll} to both families, only the identity permutation 
contributes to \eqref{li}, and we find that
\begin{equation}\label{zde}Z(q)=\sum\det_{1\leq j,k\leq b}\big(w(P_j;R_k)\big)\det_{1\leq j,k\leq b+m}\big(w(S_j;Q_k)\big), \end{equation}
where the sum is over all solutions to \eqref{cdr}, for $0\leq l\leq b$.
In contrast to \eqref{pdi}, \eqref{zde}  holds regardless of the
parity of $m$.

The first determinant in \eqref{zde} can be computed using Lemma \ref{sdl}.
By the symmetry
$$w_Z((a,b);(c,d))=w_{1-Z}((-c,-d);(-a,-b)), $$
where we indicate the $Z$-dependence in \eqref{sw} 
by a subscript, the second determinant can be expressed as
\begin{equation}\label{zsd}(-1)^{\binom{b+m}2}\det_{1\leq j,k\leq b+m}\big(w_{1-Z}(-a-b-m+j,1-c-j;-C-y_k,1-m-A+y_k)\big), \end{equation}
which is again computed by  Lemma \ref{sdl}.

It will be convenient to write $f\sim g$ if, as a function of $m$, 
$f/g=h(q^{m/2})$, with $h$  rational. 
We need to prove that 
\begin{equation}\label{zw}Z(q)\sim W^{a+b-C},\end{equation}
 where
$$W=\prod_{j=1}^m\frac{q^{\frac j2}+q^{-\frac j2}}2={2^{-m}q^{-\frac 12\binom{m+1}2}(-q;q)_m}.$$
Consider first
\eqref{zsd}, which is obtained by substituting 
\begin{multline*}(m,x_1,x_2,y_1,y_2,l_k,Z)\\
\mapsto(b+m,1-a-b-m,-C,1-b-c-m,1-m-A,-y_k,2-2A-C-m) \end{multline*}
in
 Lemma \ref{sdl}. Under this substitution,
$2^{\binom m2-m(x_2-x_1)-|l|}\mapsto 2^{m(C-a-l)}$ and
$$q^X\sim q^{2\binom m3+\frac{4b-a+C-3l+2}2\binom {m+1}2}.$$ 
The factor $\Delta(q^l)\mapsto\prod_{1\leq j<k\leq b+m}(q^{-y_k}-q^{-y_j})$ splits  into six parts, depending on whether $j$ and $k$ belong to the interval $[1,l]$,  $[l+1,l+m]$ or $[l+m+1,b+m]$. 
The three parts with neither $j$ nor $k$ in the middle interval 
are clearly rational in $q^m$. This leaves us with 
$$\prod_{1\leq j\leq l,\,1\leq k\leq m}(q^{1-k}-q^{x_j+1})\prod_{1\leq j<k\leq m}(q^{1-k}-q^{1-j})
\prod_{1\leq j\leq m,\,1\leq k\leq b-l}(q^{-m-x_k}-q^{1-j}), $$
where the first factor can be written
$$q^{-l\binom m2}\prod_{j=1}^l(q^{x_j+1};q)_m=q^{-l\binom m2}(q;q)_m^l\prod_{j=1}^l\frac{(q^{m+1};q)_{x_j}}{(q;q)_{x_j}}\sim q^{-l\binom {m+1}2}(q;q)_m^l. $$
The second factor is equal to $q^{-2\binom m3-\binom{m+1}2+m}\tilde H_q(m)$ and the third factor equivalent to $q^{-2(b-l)\binom {m+1}2}(q;q)_m^{b-l}$. Next, we have
$$\prod_{j=1}^m(q;q)_{x_2+y_2-x_1-y_1-m+j}\mapsto\prod_{j=1}^{b+m}(q;q)_{B+j-1}\sim \tilde H_q(b+B+m), $$
\begin{align*}\prod_{j=1}^m(q;q)_{x_2-x_1+l_j}&\mapsto\prod_{j=1}^l(q;q)_{a+b-C+m+x_j}\prod_{j=1}^m(q,q)_{a+b-C+j-1}\prod_{j=l+1}^{b}(q;q)_{a+b-C-x_j-1}\\
&\sim(q;q)_m^l\tilde H_q(a+b-C+m),\\
\prod_{j=1}^m(q;q)_{y_2-y_1-l_j}&\mapsto\prod_{j=1}^l(q;q)_{b+c-A-x_j-1}\prod_{j=1}^m(q,q)_{b+c-A+j-1}\prod_{j=l+1}^{b}(q;q)_{b+c-A+x_j+m}\\
&\sim(q;q)_m^{b-l}\tilde H_q(b+c-A+m).
 \end{align*} 
By Lemma \ref{hrl}, 
$$\frac{\tilde H_q(m)\tilde H_q(b+B+m)}{\tilde H_q(a+b-C+m)\tilde H_q(b+c-A+m)}\sim 1. $$
Finally, we have
\begin{multline*}\prod_{j=1}^m(-q^{Z-x_2-y_1-y_2-m+j};q)_{x_2-x_1-j+1+l_j}
\mapsto\prod_{j=1}^l(-q^{c-A+j};q)_{a+b-C+x_j+1+m-j}\\
\times\prod_{j=1}^m(-q^{c-A+l+j};q)_{a+b-C+m-l+1-2j}\prod_{j=l+1}^{b}(-q^{c-A+m+j};q)_{a+b-C-m-x_j-j}, \end{multline*}
where the first factor is equivalent to $(-q;q)_m^l$ 
and the third factor to $(-q;q)_m^{l-b}$. If $a+b\geq C+l$, the  second factor can be expressed as
$$\prod_{j=1}^m(-q^{c-A+l+j};q)_{a+b-C-l}=\prod_{j=1}^{a+b-C-l}(-q^{c-A+l+j};q)_{m}\sim(-q;q)_m^{a+b-C-l}. $$
By a similar computation,  this holds also for $a+b<C+l$.
In conclusion, \eqref{zsd} is equivalent to
$W^{a+l-C}. $
Similarly, though with less effort, we find that the
first determinant in \eqref{zde} is equivalent to
$W^{b-l}$, which gives \eqref{zw}.
\end{proof}

\subsection{Discrete Selberg integrals and Askey--Wilson polynomials}
\label{aws}

We  recall some basic facts on Askey--Wilson polynomials \cite{aw}. Normalizing them to be monic (which is not the standard choice in the literature), they are 
given by
\begin{align}\notag P_n(x)&=P_n\left(x;a,b,c,d;q\right)\\
\label{awp}&=\frac{(ab,ac,ad;q)_n}{2^na^n(abcdq^{n-1};q)_n}\,{}_4\phi_3\left(\begin{matrix}q^{-n},abcdq^{n-1},a\xi,a/\xi\\ab,ac,ad\end{matrix};q,q\right),\end{align}
where $x=(\xi+\xi^{-1})/2$.
When $\max(|a|,|b|,|c|,|d|,|q|)<1$, 
they satisfy the orthogonality relation
$$\int_{-1}^1P_m(x)P_n(x)\,w(x)\,dx=C_n{\delta_{mn}}, $$
where, using standard notation such as  $(\xi^{\pm 2};q)_\infty=(\xi^{ 2};q)_\infty(\xi^{- 2};q)_\infty$,
\begin{subequations}\label{awo}
\begin{equation}\label{awm}w(x)=w(x;a,b,c,d;q)
=\frac{(\xi^{\pm 2};q)_\infty}{(a\xi^\pm,b\xi^\pm,c\xi^\pm,d\xi^\pm;q)_\infty\sqrt{1-x^2}}
 \end{equation}
and
\begin{align}C_n&=C_n(a,b,c,d;q)\nonumber \\
&=\frac{2\pi(abcdq^{2n-1},abcdq^{2n};q)_\infty}{4^n(q^{n+1},abq^n,acq^n,adq^n,bcq^n,bdq^n,cdq^n,abcdq^{n-1};q)_\infty}. \end{align}
\end{subequations}
The polynomial $P_n$ is symmetric in the parameters $a,b,c,d$.

To link Askey--Wilson polynomials to discrete Selberg integrals, we
will need the 
Cauchy--Binet identity
\begin{equation}\label{cbi}\det_{1\leq j,k\leq m}\left(\sum_{l=0}^NA_{jl}B_{lk}\right)
=\sum_{0\leq l_1<\dots<l_m\leq N}\det_{1\leq j,k\leq m}(A_{j,l_k})\det_{1\leq j,k\leq m}(B_{l_k,j}) \end{equation}
as well as the determinant evaluation 
\begin{equation}\label{ksd}\det_{1\leq j,k\leq m}\big((aq^{j-1},bq^{m-j};q)_{l_k}\big)
=q^{\binom m3}b^{\binom m 2}\prod_{j=1}^m\frac{(a,b;q)_{l_j}(q^{j-m}a/b;q)_{j-1}}{(a,b;q)_{j-1}}\,\Delta(q^l).\end{equation}
To prove the latter, we write the determinant as
\begin{multline*}\det_{1\leq j,k\leq m}\left(\frac{(a,b;q)_{l_k}}{(a;q)_{j-1}(b;q)_{m-j}}(aq^{l_k};q)_{j-1}(bq^{l_k};q)_{m-j}\right)\\
=\prod_{j=1}^m\frac{(a,b;q)_{l_j}}{(a,b;q)_{j-1}}\det_{1\leq j,k\leq m}\big((aq^{l_k};q)_{j-1}(bq^{l_k};q)_{m-j}\big), \end{multline*}
which can be evaluated using  \cite[Lemma 2.2]{kr} or \cite[Lemma~A.1]{sc1}.

\begin{proposition}\label{dsl}
Let
$$x_k=\frac{\xi q^{k-1}+\xi^{-1}q^{1-k}}{2},\qquad k=1,\dots,m $$
and let $P_n(x)$ be the monic Askey--Wilson polynomial
\eqref{awp}. Then,
\begin{multline}\label{adds}
\frac{\det_{1\leq j,k\leq m}\big(P_{n+j-1}(x_k)\big)}{\Delta(x)}=
\frac{1}{q^{2\binom m3+(n+1)\binom m2}(2a)^{mn}}\\
\begin{split}&\times\prod_{j=1}^m\frac{(ab,ac,ad;q)_{n+j-1}}{(abcdq^{n-1};q)_{n+j-1}(q,q^{1-m-n},a\xi,a\xi^{-1}q^{1-m};q)_{j-1}}\\
&\times
{}_{4}\phi_3^{(m)}\left(\begin{matrix}q^{1-m-n},abcdq^{n-1},a\xi,aq^{1-m}\xi^{-1}\\
ab,ac,ad\end{matrix};q,q\right).
\end{split}\end{multline}
\end{proposition}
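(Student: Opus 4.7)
The plan is to isolate the $k$-dependence of the matrix entry $P_{n+j-1}(x_k)$ and then apply the Cauchy--Binet identity \eqref{cbi}. Since $x_k=(\xi q^{k-1}+\xi^{-1}q^{1-k})/2$ is obtained from $x=(\xi+\xi^{-1})/2$ by the replacement $\xi\mapsto\xi q^{k-1}$, formula \eqref{awp} gives $P_{n+j-1}(x_k)=\sum_{l\ge 0}A_{jl}B_{lk}$ with
\begin{equation*}
A_{jl}=\frac{(ab,ac,ad;q)_{n+j-1}\,(q^{1-n-j},abcdq^{n+j-2};q)_l\,q^l}{2^{n+j-1}a^{n+j-1}(abcdq^{n+j-2};q)_{n+j-1}(q,ab,ac,ad;q)_l},\quad B_{lk}=(a\xi q^{k-1},a\xi^{-1}q^{1-k};q)_l.
\end{equation*}
Cauchy--Binet then yields
\begin{equation*}
\det_{1\le j,k\le m}\bigl(P_{n+j-1}(x_k)\bigr)=\sum_{0\le l_1<\dots<l_m}\det_{1\le j,k\le m}(A_{j,l_k})\,\det_{1\le j,k\le m}(B_{l_j,k}).
\end{equation*}

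Next, both subdeterminants are of the type evaluable by \eqref{ksd}. For $\det(B_{l_j,k})$, after transposition one applies \eqref{ksd} with $(a,b)\mapsto(a\xi,a\xi^{-1}q^{1-m})$. For $\det(A_{j,l_k})$, one first pulls out the row factors depending only on $j$ and the column factors $q^{l_k}/(q,ab,ac,ad;q)_{l_k}$; what remains is $\det_{j,k}((q^{1-n-j},abcdq^{n+j-2};q)_{l_k})$, which is \eqref{ksd} with $(a,b)\mapsto(abcdq^{n-1},q^{1-n-m})$. Each application of \eqref{ksd} produces a factor $\Delta(q^l)$, so multiplying the two subdeterminants gives the factor $\Delta(q^l)^2$ that is precisely what is needed to reconstruct the multiple series ${}_4\phi_3^{(m)}$ of \eqref{mhs} with the parameters prescribed by \eqref{adds}. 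The $q^l$ in $A_{jl}$ furnishes the argument $q$ in ${}_4\phi_3^{(m)}(\dots;q,q)$.

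To pass to the quotient by $\Delta(x)$, I would use the elementary identity $x_k-x_j=(q^{k-j}-1)(\xi^2 q^{j+k-2}-1)/(2\xi q^{k-1})$, from which one obtains
\begin{equation*}
\Delta(x)=(2\xi)^{-\binom m2}q^{-2\binom m3-\binom m2}\prod_{j=1}^m(q;q)_{j-1}(\xi^2 q^{j-1};q)_{j-1}.
\end{equation*}
The factors $(q;q)_{j-1}$ and $(\xi^2 q^{j-1};q)_{j-1}$ cancel neatly against those produced by the two applications of \eqref{ksd}. The Pochhammers in the $abcd$ variable are then telescoped via
$(abcdq^{n-1};q)_{n+j-1}=(abcdq^{n-1};q)_{j-1}(abcdq^{n+j-2};q)_n$
together with $(abcdq^{n+j-2};q)_{n+j-1}=(abcdq^{n+j-2};q)_n(abcdq^{2n+j-2};q)_{j-1}$, so that the $(abcdq^{2n+j-2};q)_{j-1}$ supplied by \eqref{ksd} combines with the $(abcdq^{n-1};q)_{j-1}^{-1}$ also supplied there and the denominator $(abcdq^{n+j-2};q)_{n+j-1}$ inherited from $A_{jl}$ to yield exactly $(abcdq^{n-1};q)_{n+j-1}^{-1}$, as required.

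The principal obstacle is the bookkeeping: the overall $q$-power has contributions from four independent sources (two instances of \eqref{ksd}, the summand $q^l$ in $A_{jl}$, and $\Delta(x)$), and the prefactor in \eqref{adds} arises only after a careful collection of all $q$-, $2$-, $a$- and $\xi$-powers together with the Pochhammer telescoping above. Beyond this, no conceptual input beyond \eqref{awp}, \eqref{cbi} and \eqref{ksd} is required.
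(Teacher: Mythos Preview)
Your proposal is correct and follows exactly the same route as the paper: write $P_{n+j-1}(x_k)$ via \eqref{awp}, apply Cauchy--Binet \eqref{cbi}, evaluate the two resulting minors with \eqref{ksd}, and divide by the explicit product formula for $\Delta(x)$. The paper compresses the simplification into the single phrase ``Applying \eqref{ksd} and simplifying,'' whereas you spell out the Pochhammer telescoping and the specific substitutions $(a,b)\mapsto(abcdq^{n-1},q^{1-n-m})$ and $(a,b)\mapsto(a\xi,a\xi^{-1}q^{1-m})$, but there is no substantive difference in method.
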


\begin{proof}
Since
\begin{align*}P_{n+j-1}(x_k)&=\frac{(ab,ac,ad;q)_{n+j-1}}{(2a)^{n+j-1}(abcdq^{n+j-2};q)_{n+j-1}}\\
&\quad\times\sum_{l\geq 0}\frac{(q^{1-j-n},abcdq^{n+j-2},a\xi q^{k-1},a\xi^{-1}q^{1-k};q)_l }{(q,ab,ac,ad;q)_l}\,q^l, \end{align*}
expanding the left-hand side of \eqref{adds} using \eqref{cbi} gives
\begin{multline*}\prod_{j=1}^m\frac{(ab,ac,ad;q)_{n+j-1}}{(2a)^{n+j-1}(abcdq^{n+j-2};q)_{n+j-1}}
\sum_{0\leq l_1<\dots<l_m\leq n+m-1}\prod_{j=1}^m\frac{q^{l_j}}{(q,ab,ac,ad;q)_{l_j}}\\
\times\det_{1\leq j,k\leq m}\big((q^{1-j-n},abcdq^{n+j-2};q)_{l_k}\big)
\det_{1\leq j,k\leq m}\big((a\xi q^{j-1},a\xi^{-1}q^{1-j};q)_{l_k}\big).
 \end{multline*}
Applying \eqref{ksd} and simplifying, using also
$$\Delta(x)=\frac{\prod_{j=1}^m(q,q^{j-1}\xi^2;q)_{j-1}}{q^{2\binom m3+\binom m2}(2\xi)^{\binom m2}}, $$
completes the proof.
\end{proof}

We note that, since the Askey--Wilson polynomial is 
symmetric in its parameters, the right-hand side of \eqref{adds} is invariant under interchanging $a$ and $b$. This proves the following multiple Sears' transformation, which is
a very special case of a transformation for discrete elliptic Selberg integrals conjectured by
Warnaar \cite{w} and proved by Rains \cite{ra}. We will use this transformation
 in \S \ref{css}.

\begin{corollary}[Rains]\label{msc}
If $q^{1-n}abc=def$, then
\begin{align*}
{}_{4}\phi_3^{(m)}\left(\begin{matrix}q^{1-m-n},a,b,c\\
dq^{m-1},e,f\end{matrix};q,q\right)&=\left(\frac{bc}d\right)^{mn} \prod_{j=1}^m\frac{(b,c;a)_{j-1}(de/bc,df/bc;q)_{n+j-1}}{(d/b,d/c;q)_{j-1}(e,f;q)_{n+j-1}}\\
&\quad\times\,{}_{4}\phi_3^{(m)}\left(\begin{matrix}q^{1-m-n},a,d/b,d/c\\
dq^{m-1},de/bc,df/bc\end{matrix};q,q\right).
\end{align*}
\end{corollary}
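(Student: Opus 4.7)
The proof will follow immediately from the observation stated in the paper just before the corollary: since the Askey--Wilson polynomial $P_n(x;a,b,c,d;q)$ is symmetric in its four parameters, the left-hand side of \eqref{adds} is invariant under any permutation of $(a,b,c,d)$. In particular, transposing $a\leftrightarrow b$ in the right-hand side of \eqref{adds} must leave it unchanged. Writing this invariance as an equality between two ${}_4\phi_3^{(m)}$-series weighted by explicit prefactors, and then translating into the variables of the corollary, should yield the transformation directly.

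To make this concrete, I would fix the dictionary that identifies the upper parameters $\{abcdq^{n-1},a\xi,aq^{1-m}\xi^{-1}\}$ in \eqref{adds} with $\{a,b,c\}$ in the corollary, and the lower parameters $\{ab,ac,ad\}$ with $\{dq^{m-1},e,f\}$. The balancing condition $q^{1-n}abc=def$ of the corollary then reduces to $q^{1-n}\cdot abcdq^{n-1}\cdot a\xi\cdot aq^{1-m}\xi^{-1}=abq^{1-m}\cdot ac\cdot ad$, which holds automatically. A short computation shows that the swap $a\leftrightarrow b$ at the Askey--Wilson level corresponds, under this dictionary, exactly to the substitution $(b,c)\mapsto(d/b,d/c)$, $(e,f)\mapsto(de/bc,df/bc)$ appearing on the right-hand side of the corollary.

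It remains to match the scalar prefactors. In the prefactor of \eqref{adds}, the $q$-powers together with the factors $(abcdq^{n-1};q)_{n+j-1}$, $(ab;q)_{n+j-1}$, and $(q,q^{1-m-n};q)_{j-1}$ are all manifestly symmetric in $a$ and $b$, so they cancel when one equates the two instances of \eqref{adds}. What survives is the ratio
\[
\left(\frac{a}{b}\right)^{mn}\prod_{j=1}^{m}\frac{(bc,bd;q)_{n+j-1}(a\xi,aq^{1-m}\xi^{-1};q)_{j-1}}{(ac,ad;q)_{n+j-1}(b\xi,bq^{1-m}\xi^{-1};q)_{j-1}},
\]
which under the dictionary above becomes precisely
\[
\left(\frac{bc}{d}\right)^{mn}\prod_{j=1}^{m}\frac{(b,c;q)_{j-1}(de/bc,df/bc;q)_{n+j-1}}{(d/b,d/c;q)_{j-1}(e,f;q)_{n+j-1}},
\]
the prefactor stated in the corollary. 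The only real obstacle is notational: one must carefully track which pieces of the prefactor in \eqref{adds} are symmetric in $a,b$ and which are not, and then verify that the surviving ratio rearranges under the dictionary to the compact form above. All of this is straightforward Pochhammer bookkeeping, so no substantive difficulty is anticipated.
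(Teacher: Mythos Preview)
Your proposal is correct and follows exactly the same approach as the paper: the paper's entire argument is the single observation that the Askey--Wilson polynomial is symmetric in its parameters, so swapping $a\leftrightarrow b$ in the right-hand side of \eqref{adds} yields the transformation. You have simply spelled out the dictionary and the prefactor bookkeeping that the paper leaves implicit, and your computations (balancing condition, identification of $(b,c)\mapsto(d/b,d/c)$ and $(e,f)\mapsto(de/bc,df/bc)$, and the surviving ratio) all check out.
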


\subsection{Continuous Selberg integrals}
\label{css}

Let $\mu$ be a linear functional on  $\mathbb C[x]$, which we write as
a formal integral
$$\mu(p)=\int p(x)\,d\mu(x). $$
We will assume that $\mu $ is non-degenerate in the sence that there exist
monic polynomials $p_n$ of degree $n$ such that
\begin{equation}\label{po}\mu(p_mp_n)=C_n\delta_{mn}. \end{equation}
We   do not require any positivity condition for $\mu$.
We then have the identity
\begin{multline}\label{ch}\int \Delta(x)^2\prod_{j=1}^n\prod_{k=1}^m(y_k-x_j)\,d\mu(x_1)\dotsm d\mu(x_n)\\
=n!\,C_0\dotsm C_{n-1}\frac{\det_{1\leq j,k\leq m}\big(p_{n+j-1}(y_k)\big)}{\Delta(y)},
\end{multline}
relating a Selberg-type integral to a determinant of orthogonal polynomials.
This identity
can be obtained by combining two classical results \cite[Thm.\ 2.1.2 and Thm.~2.7.1]{i} due to Heine and Christoffel. More explicitly, it appears in \cite{bh}. A direct proof is very easy; simply write the integrand as 
$$\frac{\Delta(x)\Delta(x,y)}{\Delta(y)}, $$ 
expand both factors in the numerator using $\Delta(x)=\det(p_{j-1}(x_i))$
and then integrate using \eqref{po}. We can apply \eqref{ch} to prove 
 the following quadratic transformation formula
for determinants of Askey--Wilson polynomials.

\begin{theorem}\label{qdtt}
Let $m$, $n$, $M$  and $N$ be non-negative integers, with $m$ even, and
let  $a$, $b$ and $q$ be generic parameters.
Let $p_n$ and $q_n$  denote the monic polynomials
\begin{align*}p_n(x)&=P_{n}(x;aq^{M},-a,bq^{N},-b;q),\\
q_n(x)&=\begin{cases}2^{-n/2}P_{n/2}(2x^2-1;-1,-q^{m+1},a^2,b^2;q^2),& n  \text{\emph{ even}},\\
2^{-(n-1)/2}x P_{(n-1)/2}(2x^2-1;-q^2,-q^{m+1},a^2,b^2;q^2),& n\text{\emph{ odd}}\end{cases}\\
\end{align*}
and let
\begin{align*}y_k&=\frac{\eta_k+\eta_k^{-1}}{2},& \eta_k&=\ti q^{k-\frac{m+1}2},\qquad k=1,\dots,m,\\
z_k&=\frac{\zeta_k+\zeta_k^{-1}}{2},& \zeta_k&=\begin{cases}aq^{k-1}, &k=1,\dots,M,\\
bq^{k-M-1}, &k=M+1,\dots,M+N.
\end{cases} \end{align*}
Then, 
\begin{equation}\label{qdt}\frac{\det_{1\leq j,k\leq m}\left(p_{n+j-1}(y_k)\right)}
{\Delta(y)}
=
C\frac{\det_{1\leq j,k\leq M+N}\left(q_{n+j-1}(z_k)\right)}{\Delta(z)}, 
 \end{equation}
where 
\begin{align*}C&=
\left(2^{M+N-m}q^{\binom M2+\binom N2-\frac{m^2}4}a^{M}b^{N}\right)^n\\
&\quad\times\prod_{j=1}^{n}\left(\frac{
(q^{2[j/2]+1},-a^2q^{2[(j-1)/2]+1},-b^2q^{2[(j-1)/2]+1},a^2b^2q^{2[j/2]-1};q^2)_{\frac m2}
}{(a^2b^2q^{2j-3},a^2b^2q^{2j-1};q^2)_{\frac m2}}\right.\\
&\quad\times\left.\frac{(a^2b^2q^{2j-3},a^2b^2q^{2j-2};q)_{M+N}}{(abq^{j-1},a^2b^2q^{j-2};q)_{M+N}(-a^2q^{j-1},-abq^{j-1};q)_M(-b^2q^{j-1},-abq^{j-1};q)_N}\right)
.
\end{align*}
\end{theorem}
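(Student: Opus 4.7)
The proof applies the Christoffel--Heine identity \eqref{ch} in two different ways to a single Selberg-type integral. We first work in a parameter regime making the Askey--Wilson integrals absolutely convergent; since the claimed identity, cleared of the denominators $\Delta(y)$ and $\Delta(z)$, is polynomial in the parameters, the general case then follows by analytic continuation.

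Set $d\mu_0(x)=w(x;a,-a,b,-b;q)\,dx$ and $d\mu_1(x)=w(x;aq^M,-a,bq^N,-b;q)\,dx$ on $[-1,1]$. A direct computation gives the Christoffel relation
$$(a\xi,a\xi^{-1};q)_M(b\xi,b\xi^{-1};q)_N=(2a)^M(2b)^N q^{\binom M2+\binom N2}\prod_{k=1}^{M+N}(z_k-x),$$
so $d\mu_1$ is a known constant multiple of $\prod_k(z_k-x)\,d\mu_0$. Applying \eqref{ch} with measure $d\mu_1$, whose monic orthogonal polynomials are the $p_n$ of the theorem, and with nodes $y_1,\dots,y_m$, identifies $\det(p_{n+j-1}(y_k))/\Delta(y)$ with an explicit constant times the Selberg integral
$$\int \Delta(x)^2\prod_{j=1}^n\prod_{k=1}^m(y_k-x_j)\prod_{j=1}^n\prod_{k=1}^{M+N}(z_k-x_j)\,d\mu_0(x_1)\cdots d\mu_0(x_n).$$

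For the second evaluation, introduce the functional $\mu$ defined by $d\mu(x)=\prod_{k=1}^m(y_k-x)\,d\mu_0(x)$. Since $m$ is even and $\eta_{m+1-k}=-\eta_k^{-1}$ gives $y_{m+1-k}=-y_k$, the polynomial $\prod_k(y_k-x)=\prod_{k=1}^{m/2}(x^2-y_k^2)$ is even in $x$; together with the $\pm$-symmetric weight $d\mu_0$, this makes $\mu$ an even functional, whose monic orthogonal polynomials $\tilde p_n$ are even (respectively odd) in $x$ for even (respectively odd) degree $n$. The classical quadratic transformation for Askey--Wilson polynomials identifies $w(x;a,-a,b,-b;q)\,dx$, under the substitution $u=2x^2-1$, with the base-$q^2$ Askey--Wilson measures for parameters $(-1,-q,a^2,b^2)$ and $(-q^2,-q,a^2,b^2)$ (for even and odd parity). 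The $m/2$ roots $v_k=2y_k^2-1=-(q^{2k-m-1}+q^{m+1-2k})/2$, viewed in $u$, are exactly the first $m/2$ nodes attached to the parameter $-q$, so multiplying by the polynomial $\prod_k(y_k-x)$ performs a Christoffel shift taking the parameter $-q$ to $-q\cdot q^m=-q^{m+1}$. After restoring the $2^{-n/2}$ (or $2^{-(n-1)/2}x$) factor needed to make the polynomial monic in $x$, this identifies $\tilde p_n$ with exactly the $q_n$ of the theorem. Applying \eqref{ch} to $\mu$ with nodes $z_1,\dots,z_{M+N}$ therefore equates the same Selberg integral to a multiple of $\det(q_{n+j-1}(z_k))/\Delta(z)$.

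Equating the two evaluations proves \eqref{qdt}, with $C$ equal to the Christoffel prefactor $[(2a)^M(2b)^N q^{\binom M2+\binom N2}]^n$ times the ratio of squared norms of $q_0,\dots,q_{n-1}$ for $\mu$ to those of $p_0,\dots,p_{n-1}$ for $d\mu_1$. All these norms are explicit via the Askey--Wilson orthogonality \eqref{awo}, after accounting for the quadratic splitting. The main obstacle is the bookkeeping to reduce this ratio to the product form in the statement: one must separate the norms of $q_n$ according to the parity of $n$, extract the constant picked up in the Christoffel shift $-q\mapsto-q^{m+1}$, and systematically collapse base-$q$ and base-$q^2$ infinite products using $(x;q)_\infty(-x;q)_\infty=(x^2;q^2)_\infty$ and $(x;q^2)_\infty(xq;q^2)_\infty=(x;q)_\infty$.
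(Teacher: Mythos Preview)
Your argument is correct and follows essentially the same route as the paper: apply \eqref{ch} once to the Askey--Wilson weight with parameters $(aq^M,-a,bq^N,-b)$ and nodes $y_k$, rewrite the resulting integrand so that the factors $\prod_k(z_k-x_j)$ and $\prod_k(y_k-x_j)$ are absorbed into a new weight whose monic orthogonal polynomials are the $q_n$ (via the quadratic substitution $u=2x^2-1$ and the shift $-q\mapsto -q^{m+1}$), and then apply \eqref{ch} again with nodes $z_k$. The only cosmetic difference is that the paper manipulates the infinite products in the weight directly rather than passing through your intermediate measure $d\mu_0$, and both treatments leave the final simplification of the norm ratio as routine bookkeeping.
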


\begin{proof}
Let $L$ denote the left-hand side of \eqref{qdt}.
 Since \eqref{qdt} is a rational identity, we may assume that
$\max(|a|,|b|,|q|)<1$.
We  then apply \eqref{ch} to write
\begin{equation}\label{di}L=\frac{1}{n!\prod_{k=0}^{n-1}C_k}\int_{[-1,1]^n} \Delta(x)^2\prod_{j=1}^n\prod_{k=1}^m(y_k-x_j)\prod_{j=1}^n\,w(x_j)\,dx_j, \end{equation}
where $w$ and $C_k$ are obtained by substituting $(a,b,c,d)\mapsto(aq^{M},-a,bq^{N},-b)$  in \eqref{awo}. 

We will now rewrite \eqref{di} in such a way that the roles of $m$ and $M+N$ are interchanged.
In the orthogonality measure, we write
$$\frac{(\xi^{\pm 2};q)_\infty}{(aq^{M}\xi^\pm,-a\xi^\pm,bq^{N}\xi^\pm,-b\xi^\pm;q)_\infty}
=\frac{(a\xi^\pm;q)_M(b\xi^\pm;q)_N(\xi^{\pm 4};q^2)_\infty}{(-\xi^{\pm 2},-q\xi^{\pm 2},a^2\xi^{\pm 2},b^2\xi^{\pm 2};q^2)_\infty}
 $$
and observe that
$$(a\xi^\pm;q)_M(b\xi^\pm;q)_N=2^{M+N}q^{\binom M2+\binom N2}a^Mb^N \prod_{k=1}^{M+N}(z_k-x). $$
We also write
$$\prod_{k=1}^m\left(y_k-x\right)=\frac{(\ti q^{\frac{1-m} 2}\xi^{\pm};q)_m}{2^m\ti^m}=
2^{-m}q^{-\frac{m^2}{4}}\frac{(-q\xi^{\pm 2};q^2)_{\infty}}{(-q^{m+1}\xi^{\pm 2};q^2)_{\infty}}. $$
Combining these  facts, we find that 
\begin{equation}\label{dti}L=\frac{D}{n!\prod_{k=0}^{n-1}C_k}\int_{[-1,1]^n} \Delta(x)^2\prod_{j=1}^n\prod_{k=1}^{M+N}(z_k-x_j)\prod_{j=1}^n\tilde w(x_j)\,dx_j,  \end{equation}
where
$$\tilde w(x)=\frac{(\xi^{\pm 4};q^2)_\infty}{(-\xi^{\pm 2},-q^{m+1}\xi^{\pm 2},a^2\xi^{\pm 2},b^2\xi^{\pm 2};q^2)_\infty\sqrt{1-x^2}} $$
and 
$$D=\left(2^{M+N-m}q^{\binom M2+\binom N2-\frac{m^2}4}a^{M}b^{N}\right)^n.$$

We now apply \eqref{ch} to the integral \eqref{dti}. Let
$$\mu(p)=\int_{-1}^1p(x)\,\tilde w(x)\,dx. $$
 Then, 
\begin{align*}
\tilde w(x)\,dx&=\frac 1 2\, w(y;-1,-q^{m+1},a^2,b^2;q^2)\,dy,\\
x^2\tilde w(x)\,dx&=\frac 1 8\, w(y;-q^2,-q^{m+1},a^2,b^2;q^2)\,dy,
\end{align*}
where $y=2x^2-1$. It follows that 
the  polynomials $q_n(x)$ satisfy 
$\mu(q_mq_n)=\tilde C_n\delta_{mn} $, with
$$\tilde C_n=\begin{cases} 4^{-k} C_{k}(-1,-q^{m+1},a^2,b^2;q^2),& n=2k,\\
 4^{-k-1} C_k(-q^2,-q^{m+1},a^2,b^2;q^2),& n=2k+1.\end{cases} $$
Thus, \eqref{ch} gives 
$$L=D\prod_{j=1}^{n}\frac{\tilde C_{j-1}}{ C_{j-1}}\frac{\det_{1\leq j,k\leq M+N}(q_{n+j-1}(z_k))}{\Delta(z)}. $$
Simplifying the expression for $\tilde C_{j-1}/C_{j-1}$, we arrive at the 
desired result.
\end{proof}

We will now combine \eqref{adds} and
\eqref{qdt}. Let us substitute
$(a,b,c,d,\xi)\mapsto(\ti \alpha q^{M},-\ti \alpha,\ti \beta q^{N},-\ti \beta,\ti q^{(1-m)/2})$ in \eqref{adds},  where $M$ and $N$ are non-negative integers. 
Then, the left-hand side of \eqref{adds} equals
the left-hand side of \eqref{qdt}, under the substitutions
 $(a,b)\mapsto
(\ti \alpha,\ti \beta)$. We rewrite the matrix entries $q_{n+j-1}(z_k)$ in terms
of ${}_4\phi_3$ series, using \eqref{awp} with the distinguished parameter $a$
chosen as $-\alpha^2$ for $k\leq M$ and as $-\beta^2$ for $k\geq M+1$.
Then, the matrix entries in \eqref{qdt} can be identified with those in \eqref{qd}. More precisely,
$$q_{n+j-1}(z_k)=C_jD_k\,Q_{jk}^{MNn}(\alpha,\beta,q^{\frac{m+1}2};q), $$
where
\begin{align*}C_j&=
 \frac{(\ti/2)^{n+j-1}
 q^{
\frac {m+1}2\left[\frac{n+j-1}2\right]
+\left[\frac{(n+j-2)^2}4\right]
}
(\alpha^2\beta^2;q^2)_{[(n+j-1)/2]}}{(\alpha^2\beta^2q^{m+2[(n+j)/2]-1};q^2)_{[(n+j-1)/2]}},
\\
D_k&=\begin{cases}\displaystyle\frac{(\alpha\beta)^{k-1}q^{\binom{k-1}2}}{(\alpha^2\beta^2;q^2)_{k-1}}, & k\leq M,\\
\displaystyle\frac {(\alpha\beta)^{k-M-1}q^{\binom{k-M-1}2}}{(\alpha^2\beta^2;q^2)_{k-M-1}}, & k\geq M+1.\end{cases}\end{align*}
We compute
$$\Delta(z)=\frac{\prod_{k=1}^{M}(q,-q^{j-1}\alpha^{2};q)_{j-1}(q^{j-M}\beta/\alpha,-q^{j-1}\alpha\beta;q)_N\prod_{j=1}^{N}(q,-q^{j-1}\beta^{2};q)_{j-1}}{2^{\binom{M+N}2}\ti^{\binom{M+N}2}q^{2\binom{M}3+2\binom{N}3+\binom M2+(M+1)\binom N2}\alpha^{\binom M2}
\beta^{\binom{N}2+MN}}
$$
and simplify the factors involving $\alpha\beta$ using
{
\allowdisplaybreaks
\begin{multline*}\frac 1{
\prod_{j=1}^M{(-\alpha\beta q^{j-1};q)_N(\alpha^2\beta^2;q^2)_{j-1}}
\prod_{j=1}^N{(\alpha^2\beta^2;q^2)_{j-1}}}\\
\begin{split}&\quad\times\prod_{j=1}^{M+N}\frac{(\alpha^2\beta^2;q^2)_{[(n+j-1)/2]}}{(\alpha^2\beta^2q^{m+2[(n+j)/2]-1};q^2)_{[(n+j-1)/2]}}\prod_{j=1}^n\frac{(\alpha^2\beta^2q^{2j-3},\alpha^2\beta^2q^{2j-2};q)_{M+N}}{(\alpha^2\beta^2q^{2j-1},\alpha^2\beta^2q^{2j-3};q^2)_{\frac m2}}\\
&\quad\times\prod_{j=1}^n
\frac{
(\alpha^2\beta^2q^{2[j/2]-1};q^2)_{\frac m2}}{(-\alpha\beta q^{j-1},\alpha^2\beta^2q^{j-2};q)_{M+N}(\alpha\beta q^{j-1};q)_M(\alpha\beta q^{j-1};q)_N}\\
&=\frac{\prod_{j=1}^M(\alpha\beta q^{n+j-1};q)_N}{\prod_{j=1}^{M+N}(\alpha^2\beta^2q^{2n+j-2};q)_{j-1}}\\
&\quad\times\prod_{j=1}^{m/2}\frac{(\alpha^2\beta^2q^{2j-3};q^2)_{\left[\frac{M+N+n+1}2\right]}(\alpha^2\beta^2q^{2j-1};q^2)_{\left[\frac{M+N+n}2\right]}}{(\alpha^2\beta^2q^{2j-3};q^2)_{M+N+n}(\alpha^2\beta^2q^{2j-1};q^2)_{n}}.
\end{split}\end{multline*}
}
In the exponent of $q$, we use
\begin{multline*}\sum_{j=1}^{M+N}\left(\frac{m+1}2\left[\frac{n+j-1}2\right]+\left[\frac{(n+j-2)^2}4\right] \right)\\
=\frac 12\left(\binom{M+N+n}3-\binom n3\right)+\frac m2\left(\left[\frac{(M+N+n-1)^2}4\right]-\left[\frac{(n-1)^2}4\right]\right). 
\end{multline*}
This yields the following result.

\begin{corollary}\label{dsdc}
For $m$, $n$, $M$ and $N$ non-negative integers, with $m$ even,
{\allowdisplaybreaks
\begin{multline*}{}_{4}\phi_3^{(m)}\left(\begin{matrix}q^{1-m-n},\alpha^2\beta^2q^{M+N+n-1},\alpha q^{M-\frac m2+\frac 12},-\alpha q^{M-\frac m2+\frac 12}\\
\alpha^2q^M,\alpha\beta  q^M,-\alpha\beta q^{M+N}\end{matrix};q,q\right)\\
\begin{split}&={(-1)^{\binom{M+N}2+n\left(M+N+\frac m2\right)}\alpha^{(M+m)n+2\binom M2+\binom N2}\beta^{(M+n)N+\binom M2+2\binom N2}q^X}\\
&\quad \times
\prod_{j=1}^M\frac{(\alpha\beta q^{j+n-1};q)_N}{(q,-\alpha^{2}q^{j-1};q)_{j-1}(q^{j-M}\beta/\alpha;q)_N}\prod_{j=1}^{N}\frac 1{(q,-\beta^{2}q^{j-1};q)_{j-1}}\\
&\quad\times\prod_{j=1}^{m/2}\frac{(\alpha^2\beta^2q^{2j-3};q^2)_{[(M+N+n+1)/2]}(\alpha^2\beta^2q^{2j-1};q^2)_{[(M+N+n)/2]}}{(\alpha^2\beta^2q^{2j-3};q^2)_{M+N+n}(\alpha^2\beta^2q^{2j-1};q^2)_{n}}
\\
&\quad\times \prod_{j=1}^m\frac{(\alpha^2\beta^2q^{M+N+n-1};q)_{n+j-1}(q,q^{1-m-n};q)_{j-1}(\alpha^2q^{2M-m+1};q^2)_{j-1}}
{(\alpha^2q^M,\alpha\beta  q^M,-\alpha\beta q^{M+N};q)_{n+j-1}}\\
&\quad\times\prod_{j=1}^{n}\frac{
(q^{2[j/2]+1},\alpha^2q^{2[(j-1)/2]+1},\beta^2q^{2[(j-1)/2]+1};q^2)_{\frac m2}
}{(\alpha^2q^{j-1};q)_M(\beta^2 q^{j-1};q)_N}\prod_{j=1}^{M+N}\frac 1{( \alpha^2\beta^2q^{j+2n-2};q)_{j-1}}\\
&\quad\times Q^{MNn}(\alpha,\beta,q^{\frac {m+1}2};q),
\end{split}\end{multline*}
}
where
\begin{align*}
X&=2\binom m3+\binom m2+3\binom M3+3\binom N3+\binom M2+(M+1)\binom N2\\
&\quad+\left(\binom M2+\binom N2+m\left(M+\frac m4-\frac 12\right)\right)n+\frac 12\left(\binom{M+N+n}3-\binom n3\right)\\
&\quad+\frac m2\left(\left[\frac{(M+N+n-1)^2}4\right]-\left[\frac{(n-1)^2}4\right]\right)
\end{align*}
\end{corollary}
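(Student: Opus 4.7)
The plan is to chain Proposition \ref{dsl} with Theorem \ref{qdtt}, and then identify the resulting matrix with $Q^{MNn}$. First, I would substitute $(a,b,c,d,\xi)\mapsto (\ti\alpha q^M,-\ti\alpha,\ti\beta q^N,-\ti\beta,\ti q^{(1-m)/2})$ in \eqref{adds}. Under this substitution the nodes $x_k=(\xi q^{k-1}+\xi^{-1}q^{1-k})/2$ of Proposition \ref{dsl} become (up to an overall permutation and sign of $\xi$) the nodes $y_k$ appearing in Theorem \ref{qdtt}, and the upper parameters $(ab,ac,ad)$ become $(-\alpha^2 q^M,-\alpha\beta q^M,\alpha\beta q^{M+N})$, whose negatives match the lower parameters in the ${}_4\phi_3^{(m)}$ on the left-hand side of the claimed identity. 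Rewriting the left-hand side of \eqref{adds} then produces the desired multiple hypergeometric sum, multiplied by a concrete prefactor coming from the product in \eqref{adds}.

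Next, I would apply Theorem \ref{qdtt} with $a\mapsto\ti\alpha$, $b\mapsto\ti\beta$ to convert $\det(p_{n+j-1}(y_k))/\Delta(y)$ into $C\cdot\det(q_{n+j-1}(z_k))/\Delta(z)$. To recognize the matrix on the right as $Q^{MNn}(\alpha,\beta,q^{(m+1)/2};q)$, I expand each $q_{n+j-1}(z_k)$ by means of \eqref{awp}, choosing the distinguished parameter of the Askey--Wilson polynomial to be $-\alpha^2$ for the columns $k\le M$ (so that the pair $(a\xi,a/\xi)$ becomes $(-\alpha^2 q^{k-1},-q^{1-k})$) and $-\beta^2$ for $M+1\le k\le M+N$. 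Because $q_{n+j-1}$ is a polynomial of degree $\lceil(n+j-1)/2\rceil$ in $2x^2-1$, eventually multiplied by $x$ when $n+j$ is even, the resulting ${}_4\phi_3$ is in base $q^2$, with the two parity cases in \eqref{qma} and \eqref{qmb} appearing exactly as the even and odd cases of $n+j-1$. After pulling out factors $C_j$ depending only on the row and $D_k$ depending only on the column, one obtains $q_{n+j-1}(z_k)=C_jD_k\,Q^{MNn}_{jk}(\alpha,\beta,q^{(m+1)/2};q)$; the determinant therefore factors as $\bigl(\prod_j C_j\bigr)\bigl(\prod_k D_k\bigr)\,Q^{MNn}(\alpha,\beta,q^{(m+1)/2};q)$.

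It remains to assemble all prefactors: the prefactor of \eqref{adds} after substitution, the constant $C$ of Theorem \ref{qdtt}, the Vandermonde $\Delta(z)$ split according to $k\le M$ and $k>M$, and the row/column factors $C_j$, $D_k$. The only genuine computation is the consolidation of the $\alpha\beta$-Pochhammer symbols: many $(\alpha^2\beta^2;q^2)$ and $(\alpha\beta q^{\,\cdot};q)$ factors occur both in the denominator of $D_k$, in $\Delta(z)$, and in $C$, and they combine into the clean ratio involving $(\alpha^2\beta^2 q^{2j-3},\alpha^2\beta^2 q^{2j-1};q^2)$ displayed in the claim via the identity sketched in the paragraph preceding the corollary. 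The main obstacle is therefore not conceptual but bookkeeping: one must correctly track (i) the split between columns $k\le M$ and $M+1\le k\le M+N$, which produces the two copies of Pochhammers in $\alpha,\beta$ in the right-hand side, (ii) the parity of $n+j$ through the floor functions $[(n+j)/2]$ and $[(n+j-1)/2]$, and (iii) the many contributions to the exponent $X$ of $q$.

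The $q$-power exponent $X$ is obtained by summing the individual exponents from Proposition \ref{dsl}, from the constant $C$ in Theorem \ref{qdtt}, from $\Delta(z)$, and from the explicit $C_j, D_k$. The key intermediate identity
\[
\sum_{j=1}^{M+N}\!\left(\tfrac{m+1}{2}\Big[\tfrac{n+j-1}{2}\Big]+\Big[\tfrac{(n+j-2)^2}{4}\Big]\right)=\tfrac12\!\left(\tbinom{M+N+n}{3}-\tbinom{n}{3}\right)+\tfrac{m}{2}\!\left(\Big[\tfrac{(M+N+n-1)^2}{4}\Big]-\Big[\tfrac{(n-1)^2}{4}\Big]\right),
\]
together with the standard evaluations of $\sum\binom{j-1}{2}$ occurring from the Vandermonde, accounts for the cubic and quadratic pieces of $X$. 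Grouping these contributions yields the stated expression for $X$, completing the proof.
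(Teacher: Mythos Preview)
Your proposal is correct and follows essentially the same route as the paper: the same substitution $(a,b,c,d,\xi)\mapsto(\ti\alpha q^M,-\ti\alpha,\ti\beta q^N,-\ti\beta,\ti q^{(1-m)/2})$ in Proposition~\ref{dsl}, the same application of Theorem~\ref{qdtt} with $(a,b)\mapsto(\ti\alpha,\ti\beta)$, the same choice of distinguished Askey--Wilson parameter ($-\alpha^2$ for $k\le M$, $-\beta^2$ for $k>M$) to extract the row/column factors $C_j,D_k$ and identify the matrix with $Q^{MNn}$, and the same parity identity for the $q$-exponent. One cosmetic slip: under the substitution $(ab,ac,ad)$ become $(\alpha^2q^M,-\alpha\beta q^{M+N},\alpha\beta q^M)$ directly, not their negatives, so no extra negation is needed to match the lower parameters.
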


We  now reformulate Corollary \ref{dsdc} in a way that will be adapted to our purpose. We first apply the identity
\begin{multline*}\prod_{j=1}^{M}\frac 1{(-\alpha^{2}q^{j-1};q)_{j-1}}\prod_{j=1}^n\frac{
(\alpha^2q^{2[(j-1)/2]+1};q^2)_{\frac m2}
}{(\alpha^2q^{j-1};q)_M}\\
\begin{split}&=(-1)^{n\left(M+\frac m2\right)}\alpha^{mn-M(M-1+2n)}
 q^{\frac{nm^2}4+m\big[\frac{(n-1)^2}4\big]-3\binom M3-(n+1)\binom M2-M\binom n2}\\
&\quad\times\prod_{j=1}^{M}\frac 1{(-q^{j+1-2M}/\alpha^{2};q)_{j-1}}\prod_{j=1}^n\frac{
(q^{-2[(j-1)/2]+1-m}/\alpha^2;q^2)_{\frac m2}
}{(q^{j+1-M-n}/\alpha^2;q)_M}.\end{split} \end{multline*}
We then multiply the left-hand side by
$$q^{-\frac 12mnM}\prod_{j=1}^m\frac{(\alpha\beta q^M,-\alpha\beta q^{M+N};q)_{n+j-1}}
{(\alpha^2q^{2M-m+1};q^2)_{j-1}} $$
and  make the change of variables
$$(M,N,\alpha,\beta)\mapsto\left(|M|,|N|,\alpha q^{-\frac{|M|}2},-\sgn(MN)\beta q^{-\frac{|N|}2}\right), $$
where we no longer require $M$ and $N$ to be non-negative. Thus, for $M\geq 0$ we consider 
\begin{multline}\label{sph}q^{-\frac 12mnM}\prod_{j=1}^m\frac{(\alpha\beta q^\frac{M+N}2,-\alpha\beta q^{\frac{M-N}2};q)_{n+j-1}}
{(\alpha^2q^{M-m+1};q^2)_{j-1}}\\
\times\,{}_{4}\phi_3^{(m)}\left(\begin{matrix}q^{1-m-n},\alpha^2\beta^2q^{n-1},\alpha q^{\frac M2-\frac m2+\frac 12},-\alpha q^{\frac M2-\frac m2+\frac 12}\\
\alpha^2,\alpha\beta  q^{\frac M2+\frac N2},-\alpha\beta q^{\frac M2-\frac N2}\end{matrix};q,q\right) \end{multline}
and for $M<0$ the same quantity with $(M,\beta)\mapsto(-M,-\beta)$.
However, by Corollary \ref{msc},  \eqref{sph} 
is  invariant under this transformation. Thus, we may take the left-hand side as \eqref{sph} regardless of the sign of $M$.
This leads to the following result.

\begin{corollary}\label{fpc}
For $m$, $n$, $M$ and $N$ integers, with $m$ and $n$ non-negative and $m$ even,
and $\varepsilon=\sgn(MN)$,
{\allowdisplaybreaks
\begin{multline*}
{}_{4}\phi_3^{(m)}\left(\begin{matrix}q^{1-m-n},\alpha^2\beta^2q^{n-1},\alpha q^{\frac M2-\frac m2+\frac 12},-\alpha q^{\frac M2-\frac m2+\frac 12}\\
\alpha^2,\alpha\beta  q^{\frac M2+\frac N2},-\alpha\beta q^{\frac M2-\frac N2}\end{matrix};q,q\right)
\\
\begin{split}&= {(-1)^{\binom{|N|}2}\varepsilon^{\binom{|M|}2+N(n+M)}\alpha^{(2m-|M|)n+\binom {|N|}2}\beta^{n|N|+\binom{|M|+|N|}2+\binom{|N|}2}}q^X\\
&\quad\times
\prod_{j=1}^{|M|}\frac{(-\varepsilon\alpha\beta q^{j+n-1-\frac{|M|}2-\frac{|N|}2};q)_{|N|}}{(q,-q^{j+1-|M|}/\alpha^{2};q)_{j-1}(-\varepsilon q^{j-\frac{|M|}2-\frac{|N|}2}\beta/\alpha;q)_{|N|}}\\
&\quad\times\prod_{j=1}^{m/2}\frac{(\alpha^2\beta^2q^{2j-3-|M|-|N|};q^2)_{\left[\frac{|M|+|N|+n+1}2\right]}(\alpha^2\beta^2q^{2j-1-|M|-|N|};q^2)_{\left[\frac{|M|+|N|+n}2\right]}}{(\alpha^2\beta^2q^{2j-3-|M|-|N|};q^2)_{|M|+|N|+n}(\alpha^2\beta^2q^{2j-1-|M|-|N|};q^2)_{n}}\\
&\quad\times \prod_{j=1}^m\frac{(\alpha^2\beta^2q^{n-1};q)_{n+j-1}(q,q^{1-m-n};q)_{j-1}(\alpha^2q^{M-m+1};q^2)_{j-1}}
{(\alpha^2,\alpha\beta  q^{\frac{M+N}2},-\alpha\beta q^{\frac{M-N}2};q)_{n+j-1}}\\
&\quad\times\prod_{j=1}^{n}\frac{
(q^{2[j/2]+1},q^{-2[(j-1)/2]+1-m+|M|}/\alpha^2,\beta^2q^{-|N|+2[(j-1)/2]+1};q^2)_{\frac m2}
}{(q^{j+1-n}/\alpha^2;q)_{|M|}(\beta^2 q^{j-1-|N|};q)_{|N|}}\\
&\quad\times\prod_{j=1}^{|M|+|N|}\frac 1{( \alpha^2\beta^2q^{j+2n-2-|M|-|N|};q)_{j-1}}
\prod_{j=1}^{|N|}\frac 1{(q,-q^{j-|N|-1}\beta^{2};q)_{j-1}}\\
&\quad\times Q^{|M|,|N|,n}(\alpha q^{-\frac{|M|}2},-\varepsilon\beta q^{-\frac{|N|}2},q^{\frac{m+1}2}),
\end{split}\end{multline*}
}
where
\begin{align*}X&=2\binom m3+\binom m2-\frac 14|N|\big(|M|^2+|MN|+2|N|-2\big)
\\
&\quad+\frac 12\big\{m\left(m+M-1\right)+|M|(|M|+1-m-n)-|N|\big\}n\\
&\quad+\frac 12\left(\binom{|M|+|N|+n}3-\binom n3\right)\\
&\quad+\frac m2\left(\left[\frac{(|M|+|N|+n-1)^2}4\right]+\left[\frac{(n-1)^2}4\right]\right)
.\end{align*}
\end{corollary}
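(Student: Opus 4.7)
The plan is to obtain Corollary~\ref{fpc} from Corollary~\ref{dsdc} by three formal manipulations followed by one substantive appeal to the multiple Sears transformation of Corollary~\ref{msc}. Since the identity is rational in $\alpha,\beta,q$, it suffices to verify it for $M,N\ge 0$ and then extend to arbitrary integer $M,N$ by a symmetry argument.

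First I would apply the elementary $q$-Pochhammer reflection
$$(x;q)_{k}=(-x)^{k}q^{\binom{k}{2}}(x^{-1}q^{1-k};q)_{k}$$
to each of the factors $(-\alpha^{2}q^{j-1};q)_{j-1}$, $(\alpha^{2}q^{j-1};q)_{M}$, and $(\alpha^{2}q^{2[(j-1)/2]+1};q^{2})_{m/2}$ that appear in the prefactor of Corollary~\ref{dsdc}. This inverts the $\alpha^{2}$-argument inside these Pochhammers at the cost of an explicit monomial in $\alpha$ and a tracked power of $q$. I would then multiply both sides by
$$q^{-\frac{mnM}{2}}\prod_{j=1}^{m}\frac{(\alpha\beta q^{M},-\alpha\beta q^{M+N};q)_{n+j-1}}{(\alpha^{2}q^{2M-m+1};q^{2})_{j-1}}$$
and perform the substitution $(\alpha,\beta)\mapsto(\alpha q^{-M/2},-\beta q^{-N/2})$, which absorbs the integer parameters $M$ and $N$ into shifts of $\alpha$ and $\beta$. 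A careful but routine regrouping of $q$-shifted factorials shows that the left-hand side then reduces to the ${}_{4}\phi_{3}^{(m)}$-series stated in Corollary~\ref{fpc} (with $\varepsilon=+1$), while the right-hand side becomes $Q^{M,N,n}(\alpha q^{-M/2},-\beta q^{-N/2},q^{(m+1)/2})$ times an explicit prefactor.

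The next step is to remove the sign restriction on $M$ and $N$. The key observation is that the modified left-hand side, namely the expression~\eqref{sph}, is invariant under the substitution $(M,\beta)\mapsto(-M,-\beta)$, and by symmetry also under $(N,\alpha)\mapsto(-N,-\alpha)$. This invariance is not visible term by term, but follows from Corollary~\ref{msc}: after the change of variables, the balancing condition $q^{1-n}abc=def$ is met, and the multiple Sears transformation with an appropriate identification of $(a,b,c,d,e,f)$ interchanges the upper and lower parameters in a way that effects precisely the sign flip of $M$. Introducing $\varepsilon=\sgn(MN)$ then packages all four sign combinations into a single formula, consistent with the built-in symmetry $Q^{MNn}_{j,M+k}(\alpha,\beta,\gamma;q)=Q^{NMn}_{j,k}(\beta,\alpha,\gamma;q)$ of the matrix in~\eqref{qd}.

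The main bookkeeping obstacle is the exponent $X$ of $q$ in the resulting prefactor. Tracking it requires combining contributions from the original prefactor in Corollary~\ref{dsdc}, the Pochhammer reflections, the normalizing multiplier, and the shifts $\alpha\mapsto\alpha q^{-|M|/2}$, $\beta\mapsto-\varepsilon\beta q^{-|N|/2}$. Because of the floor symbols $[\,\cdot\,]$ present throughout, this demands a case analysis in the parities of $n$ and of $|M|+|N|+n$. Once the $q$-powers are matched, the remaining products of $q$-shifted factorials regroup into the stated form by the Appendix identities for $H_{q}$ and $\tilde H_{q}$, and no analytic input beyond Corollaries~\ref{dsdc} and~\ref{msc} is required.
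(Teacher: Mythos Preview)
Your proposal is correct and follows essentially the same route as the paper: apply the Pochhammer reflection identity to invert the $\alpha^2$-arguments, multiply by the normalizing factor, perform the substitution $(\alpha,\beta)\mapsto(\alpha q^{-|M|/2},-\varepsilon\beta q^{-|N|/2})$, and then invoke Corollary~\ref{msc} to show that the resulting expression~\eqref{sph} is invariant under $(M,\beta)\mapsto(-M,-\beta)$, which removes the sign restriction on $M$. Two small inaccuracies worth noting: the invariance of~\eqref{sph} under $(N,\alpha)\mapsto(-N,-\alpha)$ is actually trivial (it merely permutes parameters in the ${}_4\phi_3^{(m)}$ and in the prefactor) and does not require Sears; and the Appendix identities for $H_q$ and $\tilde H_q$ play no role at this stage, since Corollary~\ref{fpc} is stated entirely in terms of $q$-Pochhammer symbols.
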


\subsection{Final steps}

We can now complete the proof of Theorem \ref{mt}.
Assume first that $m$ is even. Combining Corollary \ref{pdc}, Proposition \ref{dsp} and Corollary \ref{fpc}, with the substitutions
$$(\alpha,\beta,n)\mapsto(q^{\frac 12(1-b-c-m)},q^{\frac 12(1+a+c+m)},b), $$
 yields
\eqref{mti}, with
{
\allowdisplaybreaks
\begin{align*}C&=\frac {(-1)^{\binom{|N|}2}\varepsilon^{\binom{|M|}2+N(n+M)}q^X}{2^{\frac 12{m(a+b+M+N)}+ab}}\prod_{j=1}^b\frac{(q;q)_{a+c+m+j-1}(q;q)_{j-1}(-q^{\frac 12(-a-b+M-N)+j};q)_{a}}{(q;q)_{a+j-1}(q;q)_{c+m+j-1}}\\
&\quad\times
\prod_{j=1}^m\frac{(q;q)_{j-1}(q^2;q^2)_{\frac 12(a+c+N)+j-1}}{(q;q)_{\frac{a+b+M+N}2+j-1}(q^2;q^2)_{\frac{b+c-M}2+j-1}}\\
&\quad\times
\prod_{j=1}^{|M|}\frac{(-\varepsilon q^{j+\frac a2+\frac b2-\frac{|M|}2-\frac{|N|}2};q)_{|N|}}{(q,-q^{j+b+c-|M|+m};q)_{j-1}(-\varepsilon q^{j+\frac a2+\frac b2+c+m-\frac{|M|}2-\frac{|N|}2};q)_{|N|}}\\
&\quad\times\prod_{j=1}^{m/2}\frac{(q^{2j-1+a-b-|M|-|N|};q^2)_{\left[\frac{|M|+|N|+b+1}2\right]}(q^{2j+1+a-b-|M|-|N|};q^2)_{\left[\frac{|M|+|N|+b}2\right]}}{(q^{2j-1+a-b-|M|-|N|};q^2)_{|M|+|N|+b}(q^{2j+1+a-b-|M|-|N|};q^2)_{b}}\\
&\quad\times\prod_{j=1}^{b}\frac{
(q^{2[j/2]+1},q^{-2[(j-1)/2]+b+c+|M|},q^{a+c+m-|N|+2[(j-1)/2]+2};q^2)_{\frac m2}
}{(q^{j+c+m};q)_{|M|}( q^{j+a+c+m-|N|};q)_{|N|}}\\
&\quad\times\prod_{j=1}^{|M|+|N|}\frac 1{(q^{j+a+b-|M|-|N|};q)_{j-1}}
\prod_{j=1}^{|N|}\frac 1{(q,-q^{j+a+c+m-|N|};q)_{j-1}}
\end{align*}
}
for a certain exponent $X$.

We now express $C$ in terms of the function $\tilde H_q$. The product $\prod_{j=1}^{m/2}$ is equal to
\begin{multline}\label{mhp}\prod_{j=1}^{m/2}\frac{(q;q^2)_{[(a+1)/2]+j-1}(q;q^2)_{[a/2]+j}}{(q;q^2)_{(a+b+|M|+|N|)/2+j-1}(q;q^2)_{(a+b-|m|-|N|)/2+j}}\\
=\frac{\tilde H_{q^2}\left(\left[\frac{a+1}{2}\right]+\frac{m-1}2,\left[\frac{a}{2}\right]+\frac{m+1}2,\frac{a+b+|M|+|N|-1}2,\frac{a+b-|M|-|N|+1}2\right)}{\tilde H_{q^2}\left(\left[\frac{a+1}{2}\right]-\frac{1}2,\left[\frac{a}{2}\right]+\frac{1}2,\frac{a+b+|M|+|N|+m-1}2,\frac{a+b-|M|-|N|+m+1}2\right)}. \end{multline}
All other factors can be converted using Lemma \ref{phl} and Lemma \ref{phlb}. The end result can then be simplified  using 
 Lemma \ref{qhl}. For instance, the factor $\tilde H_q(a)$ appearing from $\prod_{j=1}^b(q;q)_{a+j-1}^{-1}$ combines with factors from \eqref{mhp} as
$$\frac{\tilde H_q(a)}{\tilde H_{q^2}\left(\left[\frac{a+1}{2}\right]-\frac{1}2,\left[\frac{a}{2}\right]+\frac{1}2\right)}=\tilde H_{q^2}\left(\left[\frac a2\right],\left[\frac{a+1}2\right]\right). $$
We also observe that
\begin{align*}\prod_{j=1}^{|M|}(-\varepsilon q^{j+\frac{a+b-|M|-|N|}2};q)_{|N|}&=\frac{\tilde H_q^{-\varepsilon}\left(\frac{a+b-|M|-|N|}2,\frac{a+b+|M|+|N|}2\right)}{\tilde H_q^{-\varepsilon}\left(\frac{a+b+|M|-|N|}2,\frac{a+b-|M|+|N|}2\right)}\\
&=\frac{\tilde H_{q^2}\left(\frac{a+b-|M|-|N|}2,\frac{a+b+|M|+|N|}2\right)}{\tilde H_q\left(\frac{a+b-M-N}2,\frac{a+b+M+N}2\right)\tilde H_q^-\left(\frac{a+b+M-N}2,\frac{a+b-M+N}2\right)}, \end{align*}
where most factors on the right  eventually  cancel.
Applying Lemma \ref{phlb} to 
$$\prod_{j=1}^b(-q^{\frac 12(-a-b+M-N)+j};q)_a $$
leads, apart from functions $\tilde H_q^-$ and a power of $q$,
to the factor
$$D=\frac{D_{(a-b+M-N)/2}D_{(-a+b+M-N)/2}}{D_{(-a-b+M-N)/2}D_{(a+b+M-N)/2}}, $$
where $D_k=2^{\min(k,0)}$. Since $-a-b+M-N<0$ and $a+b+M-N>0$, it is easy to see that
$D=2^{\frac{a+b}2-\frac 12\max(|a-b|,|M-N|)}$. 
Finally, we express $\tilde H_q$ in terms of $H_q$. In this way, we find that
 \eqref{mti} holds up to some factor $q^X$, where $X$ is independent of $q$.  Since all functions in \eqref{mti} are 
 invariant  up to a sign
under replacing $q$ by $q^{-1}$ (here we use \eqref{qdi}), we must have $X=0$. 
This proves Theorem \ref{mt} in the case when $m$ is even.

If $m$ is odd we invoke Lemma \ref{pl}.
 Since a rational function
is determined by infinitely many values, 
 it is enough to prove that the right-hand side of \eqref{mti}, considered as a function of $m$, has the same form as the right-hand side of \eqref{pli}.
Since \eqref{qd} is a Laurent polynomial in $\alpha,\beta,\gamma$, 
the second factor in \eqref{mti} is rational in $q^{m/2}$. 
The final factor in \eqref{mtc}, involving $H_q^{-\varepsilon}$, is rational in $q^m$ by Lemma \ref{qhl}. 
Consider now the remaining $q$-hyperfactorials in \eqref{mtc}.
We rewrite $H_q^-$ as $H_{q^2}/H_q$ and then apply Lemma \ref{qhl} to  all factors of the
form $f(x+m)$, where $f=H_q$ or $H_{q^2}$. This leads to a product of the form
$$\prod_{j=1}^{6}\frac{H_{q^2}\left (a_j+\frac{m}2\right)}{H_{q^2}\left (b_j+\frac{m}2\right)}\prod_{j=1}^{10}\frac{H_{q^2}\left (c_j+\frac{m-1}2\right)}{H_{q^2}\left (d_j+\frac{m-1}2\right)}\prod_{j=1}^{10}\frac{H_{q^4}\left (e_j+\frac{m}2\right)}{H_{q^4}\left (f_j+\frac{m}2\right)}\prod_{j=1}^{10}\frac{H_{q^4}\left (g_j+\frac{m-1}2\right)}{H_{q^4}\left (h_j+\frac{m-1}2\right)}, $$  
where $a_1,\dots,h_{10}$ are all integers. One may check that
\begin{multline*}\sum_{j=1}^6 (b_j-a_j)=\sum_{j=1}^{10}(d_j-c_j)=\sum_{j=1}^{10}(e_j-f_j)=\sum_{j=1}^{10}(g_j-h_j)\\
=\frac{a+b+M+N}2=a+b-C.\end{multline*}
Applying Lemma \ref{mhl}, it follows that the right-hand side of \eqref{mti} indeed behaves as \eqref{zw} as a function of $m$. This completes the proof
of Theorem \ref{mt}.

\section*{Appendix. The $q$-hyperfactorial}

 \renewcommand{\thesection}{\Alph{section}}
 \setcounter{equation}{0}
\setcounter{theo+}{0}
\setcounter{section}{1}

In this Appendix, we collect some elementary properties of
 the $q$-hyperfactorials $H_q$ and  $\tilde H_q$ defined in \eqref{qsf}.

\begin{lemma}\label{hrl}
Let $a_1,\dots,a_n,b_1,\dots,b_n$ be non-negative integers, such that
$\sum a_j=\sum  b_j$ and let $H$ be any one of the
functions $H_q$, $H_q^-$, $\tilde H_q$ or $\tilde H_q^-$.
Then, there exists a rational function $f$ such that
\begin{equation}\label{hfr}\prod_{k=1}^n\frac{H\left(a_k+\frac m2\right)}{H\left(b_k+\frac m2\right)}=f(q^{m/2}) \end{equation}
for each non-negative integer $m$. \end{lemma}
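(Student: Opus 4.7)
I will handle the case $H=\tilde H_q$ first; the other three reduce to it by routine manipulations. The plan is to reorganise the left-hand side into a product of $q$-Pochhammer symbols that is manifestly rational in $q^{m/2}$.

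The uniform observation I exploit is that
\[
\frac{\tilde H_q(r+1)}{\tilde H_q(r)}=\frac{C_m}{(q^{r+1};q)_\infty},
\]
with $C_m=(q;q)_\infty$ when $r$ is an integer (equivalently $m$ even under the substitution $r=a+m/2$) and $C_m=(q^{1/2};q)_\infty$ when $r$ is a half-integer. Iterating this and dividing gives, for $a\ge b$,
\[
\frac{\tilde H_q(a+m/2)}{\tilde H_q(b+m/2)}=\prod_{i=b}^{a-1}\frac{C_m}{(q^{i+m/2+1};q)_\infty},
\]
and symmetrically for $a<b$. Taking the product over $k$ and reorganising via the counting exponents $e_i=\#\{k:a_k>i\}-\#\{k:b_k>i\}$,
\[
\prod_{k=1}^n\frac{\tilde H_q(a_k+m/2)}{\tilde H_q(b_k+m/2)}=C_m^{\sum_k(a_k-b_k)}\prod_{i\ge 0}(q^{i+m/2+1};q)_\infty^{-e_i}.
\]
The hypothesis $\sum a_k=\sum b_k$ simultaneously kills the $C_m$-prefactor and forces $\sum_i e_i=0$. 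A further telescoping $(q^{i+m/2+1};q)_\infty=(q^{m/2+1};q)_\infty/(q^{m/2+1};q)_i$ collapses the remainder to $\prod_{i\ge 0}(q^{m/2+1};q)_i^{e_i}$; since $(q^{m/2+1};q)_i=\prod_{j=1}^i(1-q^j\cdot q^{m/2})$ is a polynomial in $x=q^{m/2}$, I obtain the required rational function $f(x)=\prod_i\prod_{j=1}^i(1-xq^j)^{e_i}$.

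For $\tilde H_q^-=\tilde H_{q^2}/\tilde H_q$ I apply the result twice, with bases $q$ and $q^2$; the output of the second application is rational in $(q^2)^{m/2}=(q^{m/2})^2$ and hence also in $q^{m/2}$. The analogous reductions for $H_q$ and $H_q^-$ use the relation $H_q(r)=q^{-P(r)}\tilde H_q(r)$, where $P$ is a piecewise polynomial ($\binom{r+1}{3}/2$ on integer inputs, $\binom{2r+1}{3}/16$ on half-integer inputs); expanding $\sum_k[P(b_k+m/2)-P(a_k+m/2)]$ and invoking $\sum(b_k-a_k)=0$ shows that both branches of $P$ contribute the same polynomial in $m$, so the extra power of $q$ combines with the $\tilde H_q$ output to give the desired rational function.

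The point I expect to cost the most care is the uniform bookkeeping across integer and half-integer arguments: both $\tilde H_q$ and $P$ are defined piecewise, and the clean uniform formulas emerge only after summing over $k$ and invoking $\sum a_k=\sum b_k$. In particular, the transcendental piece $C_m^{\sum(a_k-b_k)}$ above -- the only obstruction to $q^{m/2}$-rationality in the $\tilde H_q$ computation -- disappears precisely because of the zero-sum hypothesis.
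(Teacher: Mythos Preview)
Your argument follows the paper's: handle $\tilde H_q$ directly, then reduce $\tilde H_q^-$, $H_q$, $H_q^-$ to it. For $\tilde H_q$ the paper writes $\tilde H_q(a+m/2)$ explicitly as $\tilde H_q$ of a base point times a product of $(q^{1+m/2};q)_{j-1}$'s and observes that the parity-dependent prefactors cancel under $\sum a_k=\sum b_k$; your telescoping via $\tilde H_q(r+1)/\tilde H_q(r)=C_m/(q^{r+1};q)_\infty$ together with the exponents $e_i$ reaches the same $f(x)=\prod_{i\ge 0}(qx;q)_i^{\,e_i}$, just organised differently. The reductions for the remaining three functions are identical to the paper's.

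One step is underjustified in your write-up, and the paper is equally brief at the same spot. For $H=H_q$, the exponent $\sum_k\bigl[P(b_k+m/2)-P(a_k+m/2)\bigr]$ is indeed linear in $m$, and both parity branches of $P$ give the same linear form; but its slope is $\tfrac18\sum_k(b_k^2-a_k^2)$, which need not lie in $\tfrac12\mathbb Z$, so $q$ raised to this exponent need not be rational in $q^{m/2}$. Concretely, $(a_1,a_2;b_1,b_2)=(2,0;1,1)$ gives
\[
\frac{H_q(2+m/2)\,H_q(m/2)}{H_q(1+m/2)^2}=q^{-(m+2)/4}-q^{(m+2)/4},
\]
which is not of the form $f(q^{m/2})$ with $f$ rational. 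In the paper the lemma is only invoked for $\tilde H_q$, so nothing downstream is affected; but your final sentence, like the paper's corresponding line, elides a point that is not literally true as stated.
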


\begin{proof}
It is easy to check that
\begin{multline}\label{hqd}\tilde H_q\left(a+\frac m2\right)\\
=\begin{cases}\tilde H_q(m/2)(q;q)_{m/2}^a \prod_{j=1}^a(q^{1+m/2};q)_{j-1}, & m \text{ even},\\
\tilde H_q((m+1)/2)(q^{1/2};q)_{(m+1)/2}^a \prod_{j=1}^a(q^{1+m/2};q)_{j-1}, & m \text{ odd}.
\end{cases} \end{multline}
It follows that \eqref{hfr} holds for $H=\tilde H_q$, with
$$f(x)=\prod_{k=1}^n\frac{\prod_{j=1}^{a_k}(qx;q)_{j-1}}{\prod_{j=1}^{b_k}(qx;q)_{j-1}}. $$

Let us now replace $\tilde H_q$ by $H_q$. If $m$ is even,
\eqref{hfr} is multiplied with $q^{Q(m)/2}$, where $Q$ is the polynomial
$$Q(m)=\sum_{k=1}^n\left(\binom{b_k+\frac m2+1}{3}-\binom{a_k+\frac m2+1}{3}\right). $$
Note that the cubic and quadratic terms in $Q$ cancel, so $q^{Q(m)/2}$ is a rational function in $q^{m/2}$. Moreover, since $\frac 1 8\binom{2m+1}3=\binom{m+1}3+\frac m8 $, the same multiplier appears when $m$ is odd. This proves the case $H=H_q$.  The remaining two cases follow since $H_q^-=H_{q^2}/H_q$
and  $\tilde H_q^-=\tilde H_{q^2}/\tilde H_q$.
\end{proof}

We will also need the following variation of Lemma \ref{hrl}.

\begin{lemma}\label{mhl}
Let $a_j$, $b_j$, $c_j$ and $d_j$ be non-negative integers such that
$$\sum_{j=1}^k(a_j-b_j)=\sum_{j=1}^l(c_j-d_j)=\lambda.$$
 Then there exists a rational function $f$ such that
$$\prod_{j=1}^k\frac{H_q\left(a_j+\frac m2\right)}{H_q\left(b_j+\frac m2\right)}
\prod_{j=1}^l\frac{H_q\left(c_j+\frac {m-1}2\right)}{H_q\left(d_j+\frac {m-1}2\right)}
=\left(q^{-\frac 14\binom{m+1}2}(q^{1/2};q^{1/2})_m\right)^\lambda f(q^{m/2})
 $$
for each non-negative integer $m$.
\end{lemma}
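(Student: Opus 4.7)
The natural approach is to reduce the claim to Lemma \ref{hrl} by introducing balancing factors that equalise the sums of top and bottom indices. Multiply and divide the first product by $(H_q(1+m/2)/H_q(m/2))^{\lambda}$, and the second product by $(H_q(1+(m-1)/2)/H_q((m-1)/2))^{\lambda}$. After this manipulation, in each product the multisets of top and bottom arguments now have equal sums, so Lemma \ref{hrl} (applied directly to the first product, and after the substitution $m\mapsto m-1$ to the second) expresses each balanced ratio as a rational function of $q^{m/2}$. In the second case, the resulting rational function is a priori in $q^{(m-1)/2}=q^{m/2}q^{-1/2}$, but this is re-absorbed into a rational function of $q^{m/2}$ with coefficients in $\mathbb{Q}(q^{1/2})$. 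Consequently the left-hand side equals
$$\left(\frac{H_q(1+m/2)\,H_q((m+1)/2)}{H_q(m/2)\,H_q((m-1)/2)}\right)^{\lambda}\,f(q^{m/2})$$
for some rational function $f$.

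The heart of the proof is then to verify that the parenthesised correction factor equals $q^{-\binom{m+1}2/4}(q^{1/2};q^{1/2})_m$ for every non-negative integer $m$. This requires a case split on parity, since $m/2$ and $(m\pm 1)/2$ switch between integer and half-integer values as $m$ alternates. For even $m$, the integer case of \eqref{qsf} yields $H_q(1+m/2)/H_q(m/2)=q^{-m(m+2)/16}(q;q)_{m/2}$ (using $\binom{m/2+2}3-\binom{m/2+1}3=\binom{m/2+1}2$), while the half-integer case yields $H_q((m+1)/2)/H_q((m-1)/2)=q^{-m^2/16}(q^{1/2};q)_{m/2}$ (using $\binom{m+2}3-\binom m3=m^2$); multiplying these and invoking the elementary identity $(q^{1/2};q^{1/2})_m=(q;q)_{m/2}(q^{1/2};q)_{m/2}$ (valid for even $m$) yields exactly $q^{-\binom{m+1}2/4}(q^{1/2};q^{1/2})_m$. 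For odd $m$ the two roles are swapped: one obtains $q^{-(m+1)^2/16}(q^{1/2};q)_{(m+1)/2}$ and $q^{-(m^2-1)/16}(q;q)_{(m-1)/2}$, whose product again equals the target factor thanks to the analogous odd-$m$ decomposition $(q^{1/2};q^{1/2})_m=(q^{1/2};q)_{(m+1)/2}(q;q)_{(m-1)/2}$.

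The main obstacle is purely the combinatorial bookkeeping of the parity-dependent prefactors $q^{-\binom{n+1}3/2}$ and $q^{-\binom{2n+1}3/16}$ in the definition \eqref{qsf} of $H_q$, together with tracking which of the Pochhammer symbols $(q;q)_{\bullet}$ or $(q^{1/2};q)_{\bullet}$ appears at each step. Conceptually, the shifts $m/2$ and $(m-1)/2$ in the statement are chosen precisely so that these two types of Pochhammer factors always combine, across the two products, into the single symmetric factor $(q^{1/2};q^{1/2})_m$, independently of the parity of $m$; this is why one uniform formula in terms of $(q^{1/2};q^{1/2})_m$ is possible despite the parity-dependent behaviour of the individual $H_q$ factors.
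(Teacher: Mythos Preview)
Your proposal is correct and follows essentially the same route as the paper. The paper proceeds by directly invoking the identity \eqref{hqd} from the proof of Lemma~\ref{hrl} to express $\prod_j H_q(a_j+m/2)/H_q(b_j+m/2)$ as $(q^{-(m+2)m/16}(q;q)_{m/2})^\lambda$ or $(q^{-(m+1)^2/16}(q^{1/2};q)_{(m+1)/2})^\lambda$ times a rational function of $q^{m/2}$, depending on parity, then shifts $m\mapsto m-1$ for the second product and combines via the same factorisation of $(q^{1/2};q^{1/2})_m$ that you use; your version repackages the first step by inserting the explicit balancing factors $H_q(1+m/2)/H_q(m/2)$ and $H_q((m+1)/2)/H_q((m-1)/2)$ so that Lemma~\ref{hrl} can be quoted as a black box, but the underlying computation and the parity case split are identical.
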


\begin{proof}
It follows  from \eqref{hqd} that
$$\prod_{j=1}^k\frac{H_q\left(a_j+\frac m2\right)}{H_q\left(b_j+\frac m2\right)}
=\begin{cases}\left(q^{-\frac 1{16}(m+2)m}(q;q)_{m/2}\right)^\lambda f(q^{m/2}), & m \text{ even},\\
\left(q^{-\frac 1{16}(m+1)^2} (q^{1/2};q)_{(m+1)/2}\right)^\lambda f(q^{m/2}), & m \text{ odd},
\end{cases}$$
where $f$ is rational. Replacing $m$ by $m-1$, it follows that
$$\prod_{j=1}^l\frac{H_q\left(c_j+\frac {m-1}2\right)}{H_q\left(d_j+\frac {m-1}2\right)}
=\begin{cases}\left(q^{-\frac 1{16}m^2} (q^{1/2};q)_{m/2}\right)^\lambda g(q^{m/2}),& m \text{ even}\\
\left(q^{-\frac 1{16}(m+1)(m-1)}(q;q)_{(m-1)/2}\right)^\lambda g(q^{m/2}), & m \text{ odd},
\end{cases}$$
again with $g$ rational. Using
$$(q^{1/2};q^{1/2})_m=\begin{cases}
(q^{1/2};q)_{m/2}(q;q)_{m/2}, & m \text{ even},\\
(q^{1/2};q)_{(m+1)/2}(q;q)_{(m-1)/2},
 & m \text{ odd}
\end{cases}$$
we obtain the desired result.
\end{proof}

The following two lemmas are straight-forward to verify.

\begin{lemma}\label{qhl}
For $m$  a  non-negative integer,
$$\tilde H_q(m)=\tilde H_{q^2}\left(\frac{m-1}2,\frac m2,\frac m2,\frac{m+1}2\right). $$ 
The same identity holds if $\tilde H_q$ is replaced by $H_q$.
\end{lemma}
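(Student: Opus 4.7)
The plan is to reduce the lemma to an elementary identity on $q$-Pochhammer symbols. The key observation is the parity split
$$
(q;q)_j = (q;q^2)_{\lceil j/2\rceil}\,(q^2;q^2)_{\lfloor j/2\rfloor},
$$
obtained by separating the factors $1-q^i$ with $i$ odd from those with $i$ even in $(q;q)_j = \prod_{i=1}^j(1-q^i)$. Substituting this into $\tilde H_q(m) = \prod_{j=0}^{m-1}(q;q)_j$ and grouping even and odd values of $j$ yields, after trivial reindexing,
\begin{align*}
\tilde H_q(2k) &= \Bigl(\prod_{l=0}^{k-1}(q^2;q^2)_l\Bigr)^{\!2}\prod_{l=0}^{k-1}(q;q^2)_l\prod_{l=0}^{k}(q;q^2)_l,\\
\tilde H_q(2k+1) &= \prod_{l=0}^{k-1}(q^2;q^2)_l^2\,(q^2;q^2)_k\,\Bigl(\prod_{l=0}^{k}(q;q^2)_l\Bigr)^{\!2}.
\end{align*}
Expanding the right-hand side of the claimed identity directly from \eqref{qsf} in the two parity cases produces exactly these same two expressions, which proves the $\tilde H_q$-statement.

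To upgrade to $H_q$, it remains to check that the prefactors from \eqref{qsf} match. Writing $H_q(m) = q^{-E(m)}\tilde H_q(m)$ with $E$ as in \eqref{qsf}, this reduces to the two elementary cubic identities
$$
\tfrac 12\binom{2k+1}{3} = \tfrac 18\binom{2k}{3}+2\binom{k+1}{3}+\tfrac 18\binom{2k+2}{3}
$$
for $m=2k$, and
$$
\tfrac 12\binom{2k+2}{3} = \binom{k+1}{3}+\tfrac 14\binom{2k+2}{3}+\binom{k+2}{3}
$$
for $m=2k+1$, both of which are immediate from expanding the binomials as polynomials in $k$. There is no real obstacle here: the lemma is pure bookkeeping in parity splitting, and the only point that needs a moment's care is matching the half-integer cases of $\tilde H_{q^2}$ (which use the base $q^2$ and hence yield $(q;q^2)$-Pochhammers) with the odd-$j$ contributions on the left.
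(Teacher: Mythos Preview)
Your proof is correct. The paper offers no argument beyond the remark that the lemma is ``straight-forward to verify''; your parity split $(q;q)_j=(q;q^2)_{\lceil j/2\rceil}(q^2;q^2)_{\lfloor j/2\rfloor}$ is precisely the natural verification, and the cubic identities for the exponents check out as written.
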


\begin{lemma}\label{phl}
For $k$, $l$ and $m$ non-negative integers,
\begin{align*}
\prod_{j=1}^k(q;q)_{m+j-1}&=\frac{\tilde H_q(k+m)}{\tilde H_q(m)},\\
\prod_{j=1}^k(q^{l+j};q)_{m}&=\frac{\tilde H_q(l,k+l+m)}{\tilde H_q(k+l,l+m)},\\
\prod_{j=1}^k(q^{l+j};q)_{j-1}&=\frac{ \tilde H_{q^2}\left(\frac l2,\frac{l+1}2,\frac{l-1}{2}+k,\frac{l}{2}+k\right)}{\tilde H_q(l+k)}.\end{align*}
Moerover, for $k$, $2l+1$ and $m$ non-negative integers,
\begin{align*}\prod_{j=1}^k(q^{l+1+[(j-1)/2]};q)_m&=\frac{\tilde H_q(l,l,l+m+[k/2],l+m+[(k+1)/2])}{\tilde H_q(l+m,l+m,l+[k/2],l+[(k+1)/2])}, \\
\prod_{j=1}^k(q^{l+1+[j/2]};q)_m&=\frac{\tilde H_q(l,l+1,l+m+[(k+1)/2],l+m+[(k+2)/2])}{\tilde H_q(l+m,l+m+1,l+[(k+1)/2],l+[(k+2)/2])},\\
\prod_{j=1}^k(q^{l+\frac k2-\left[\frac{j-1}2\right]};q)_m&=\frac{\tilde H_q\left(l-\varepsilon,l+\varepsilon,l+m+k/2,l+m+k/2\right)}{\tilde H_q\left(l+m-\varepsilon,l+m+\varepsilon,l+k/2,l+k/2\right)},
\end{align*}
where $\varepsilon=0$ for $k$ even and $\varepsilon=1/2$ for $k$ odd.
\end{lemma}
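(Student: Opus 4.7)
The plan is to verify each of the six identities by direct manipulation of products, leveraging the telescoping definition $\tilde H_q(m)=\prod_{j=1}^m(q;q)_{j-1}$. Identities one and two are immediate: writing $(q^a;q)_b=(q;q)_{a+b-1}/(q;q)_{a-1}$, identity one follows from telescoping the ratios $\tilde H_q(m+j)/\tilde H_q(m+j-1)$, and identity two follows by applying identity one separately to the numerator product $\prod_j(q;q)_{l+j+m-1}$ and the denominator product $\prod_j(q;q)_{l+j-1}$.

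Identities four, five, and six all reduce to identity two. In each case, the exponent of $q$ in the Pochhammer factor takes each integer value exactly twice as $j$ ranges over $\{1,\dots,k\}$, with at most one unpaired term when $k$ is odd (the position of the unpaired term depends on which of $[(j-1)/2]$, $[j/2]$, or $[k/2]-[(j-1)/2]$ governs the exponent). So each product rearranges as the square of a shorter product of the form $\prod_{j=1}^{[k/2]}(q^{l+j};q)_m$ times at most one leftover factor. Evaluating each piece via identity two and collecting $\tilde H_q$ values yields the asserted formula; the expressions $[k/2]$ and $[(k+1)/2]$ in the answer arise naturally from the parity splits and so unify the even-$k$ and odd-$k$ cases.

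The main obstacle is identity three, where one must pass between base $q$ and base $q^2$. Writing $(q^{l+j};q)_{j-1}=(q;q)_{l+2j-2}/(q;q)_{l+j-1}$, identity one handles the denominator as $\tilde H_q(l+k)/\tilde H_q(l)$, so the problem reduces to computing $\prod_{j=1}^k(q;q)_{l+2j-2}$, a product of $(q;q)_n$ at indices forming an arithmetic progression of common difference two. Using the base-change formulas $(q;q)_{2n}=(q;q^2)_n(q^2;q^2)_n$ and $(q;q)_{2n+1}=(q;q^2)_{n+1}(q^2;q^2)_n$, each factor splits into a $(q;q^2)$-part and a $(q^2;q^2)$-part, and each of these subproducts telescopes to a ratio of $\tilde H_{q^2}$ values at integer or half-integer arguments. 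The cases $l$ even and $l$ odd must be handled separately and then unified using the fact that $\tilde H_{q^2}$ is defined at both integer and half-integer arguments. Combining with the leftover $\tilde H_q(l)$ factor via Lemma \ref{qhl}, which expresses $\tilde H_q(l)$ as a product of four $\tilde H_{q^2}$ values at shifts $l/2\pm 1/2$ and $l/2$, produces the claimed formula in closed form.
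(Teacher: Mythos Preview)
Your argument is correct. The paper does not give a proof of this lemma at all; it simply states that the identities are ``straight-forward to verify,'' so there is nothing to compare your approach against beyond confirming that your route is indeed a straightforward one---and it is. Your reductions of identities one and two to the telescoping definition of $\tilde H_q$, and of identities four through six to identity two via the parity pairing of the exponents, are exactly the natural verifications. For identity three your base-change via $(q;q)_{2n}=(q;q^2)_n(q^2;q^2)_n$ and $(q;q)_{2n+1}=(q;q^2)_{n+1}(q^2;q^2)_n$, followed by an appeal to Lemma~\ref{qhl} to absorb the leftover $\tilde H_q(l)$, is clean and correct.

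One minor point worth making explicit: in handling identity six with $k$ odd you implicitly apply identity two with a half-integer shift in place of~$l$. This is fine, since the proof of identity two goes through verbatim using $(q^a;q)_m=(q^{1/2};q)_{a+m-1/2}/(q^{1/2};q)_{a-1/2}$ and the half-integer branch of the definition of $\tilde H_q$, but you might flag it so the reader is not momentarily thrown by the hypothesis ``$l$ a non-negative integer'' in identity two.
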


We  also need the following variation of the second identity in
Lemma \ref{phl}.

\begin{lemma}\label{phlb}
For $k$ and $m$ non-negative integers and $l$ an arbitrary integer,
$$\prod_{j=1}^k(-q^{l+j};q)_m=\frac{C_{k+l}C_{l+m}}{C_lC_{k+l+m}}\frac{\tilde H_q^-(|l|,|k+l+m|)}{\tilde H_q^-(|k+l|,|l+m|)}, $$
where 
$C_n=1$ for $n\geq 0$ and $C_n=2^{n}q^{(n-n^3)/6}$ for $n<0$.
\end{lemma}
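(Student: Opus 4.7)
\emph{Proof plan.} My approach is to reduce to the case $l\ge 0$, where every factor is non-vanishing and one can specialize Lemma \ref{phl} directly, and then to extend to $l<0$ by an induction on $|l|$ using a natural extension of $\tilde H_q^-$ to all integer arguments. First, I would invoke the squared identity $(a;q)_m(-a;q)_m=(a^2;q^2)_m$ with $a=q^{l+j}$ to write
\begin{equation*}
\prod_{j=1}^k(-q^{l+j};q)_m \;=\; \prod_{j=1}^k\frac{(q^{2l+2j};q^2)_m}{(q^{l+j};q)_m}.
\end{equation*}
For $l\ge 0$ every factor on the right is nonzero, and applying the second identity of Lemma \ref{phl} to the denominator (base $q$) and to the numerator (base $q^2$) gives
\begin{equation*}
\prod_{j=1}^k(-q^{l+j};q)_m \;=\; \frac{\tilde H_{q^2}(l,k+l+m)\,\tilde H_q(k+l,l+m)}{\tilde H_{q^2}(k+l,l+m)\,\tilde H_q(l,k+l+m)} \;=\; \frac{\tilde H_q^-(l)\,\tilde H_q^-(k+l+m)}{\tilde H_q^-(k+l)\,\tilde H_q^-(l+m)},
\end{equation*}
which matches the claim, since all four of $l,k+l,l+m,k+l+m$ are nonnegative and every $C_n$ equals $1$.

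To cover $l<0$ I would extend $\tilde H_q^-$ to all integer arguments via the recursion $\tilde H_q^-(n)=\tilde H_q^-(n-1)(-q;q)_{n-1}$, with $(-q;q)_n$ itself extended by $(1+q^{n+1})(-q;q)_n=(-q;q)_{n+1}$. A short telescoping calculation, using $1+q^{-i}=q^{-i}(1+q^i)$ and $\sum_{i=1}^{k-1}i(k-i)=\binom{k+1}{3}$, then yields for $k\ge 0$
\begin{equation*}
\tilde H_q^-(-k) \;=\; 2^{k}\,q^{-\binom{k+1}{3}}\,\tilde H_q^-(k),
\end{equation*}
which is precisely $\tilde H_q^-(|n|)/C_n$ with $C_n=2^n q^{(n-n^3)/6}$ for $n<0$; here one uses that $(-k-(-k)^3)/6=\binom{k+1}{3}$.

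Finally, I would establish the unified identity
\begin{equation*}
F(l)\;:=\;\prod_{j=1}^k(-q^{l+j};q)_m \;=\; \frac{\tilde H_q^-(l)\,\tilde H_q^-(k+l+m)}{\tilde H_q^-(k+l)\,\tilde H_q^-(l+m)}\;=:\;G(l)
\end{equation*}
(with the extended $\tilde H_q^-$) for every $l\in\mathbb{Z}$ by induction on $|l|$, taking the base case $l\ge 0$ from the first paragraph. The telescoping $F(l)/F(l-1)=(-q^{l+k};q)_m/(-q^l;q)_m$ is immediate, while the extended recursion $\tilde H_q^-(n)/\tilde H_q^-(n-1)=(-q;q)_{n-1}$ together with $\prod_{i=0}^{m-1}(1+q^{a+i})=(-q;q)_{a+m-1}/(-q;q)_{a-1}$ makes $G(l)/G(l-1)$ equal the same ratio, so $F=G$ identically. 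Substituting the formula $\tilde H_q^-(n)=\tilde H_q^-(|n|)/C_n$ from the previous step then rewrites $G(l)$ in exactly the form stated in Lemma \ref{phlb}. The only real calculation, and the step I expect to be the main obstacle, is the middle one: verifying that the natural extension of $\tilde H_q^-$ to negative integers packages into precisely the prefactor $C_n=2^n q^{(n-n^3)/6}$. Everything else is formal telescoping and induction.
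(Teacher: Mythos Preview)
Your proof is correct but follows a different route than the paper's. The paper proceeds by a double induction on $k$ and $m$: induction on $k$ reduces to the case $k=1$, and induction on $m$ then reduces everything to the single-variable identity
\[
1+q^{n}=\frac{C_{n}^2}{C_{n-1}C_{n+1}}\,\frac{\tilde H_q^-(|n-1|,|n+1|)}{\tilde H_q^-(|n|)^2},
\]
which is checked by computing the right-hand ratio of $\tilde H_q^-$ in the three cases $n\ge 1$, $n=0$, $n\le -1$ and recognising the resulting recursion for $C_n$. Your argument instead first settles the case $l\ge 0$ wholesale via the factorisation $(a;q)_m(-a;q)_m=(a^2;q^2)_m$ together with Lemma~\ref{phl}, then extends $\tilde H_q^-$ to negative integer arguments by its defining recursion and shows that $\tilde H_q^-(n)=\tilde H_q^-(|n|)/C_n$, and finally runs a single induction in $l$. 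The virtue of your approach is that it makes transparent \emph{why} the specific prefactor $C_n=2^nq^{(n-n^3)/6}$ appears: it is exactly the reflection factor relating the natural two-sided extension of $\tilde H_q^-$ to its restriction to non-negative arguments. The paper's approach, by contrast, is more self-contained (it does not invoke Lemma~\ref{phl}) and arrives at $C_n$ as the solution of a second-order recursion with initial values $C_0=C_1=1$.
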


\begin{proof}
By induction on $k$, the result is reduced to
$$(-q^{k+l+1};q)_m=\frac{C_{k+l+1}C_{k+l+m}}{C_{k+l}C_{k+l+m+1}}\frac{\tilde H_q^-(|k+l+m+1|,|k+l|)}{\tilde H_q^-(|k+l+m|,|k+l+1|)}. $$
By induction on $m$, this is in turn reduced to
$$1+q^{k+l+m+1}=\frac{C_{k+l+m+1}^2}{C_{k+l+m}C_{k+l+m+2}}\frac{\tilde H_q^-(|k+l+m|,|k+l+m+2|)}{\tilde H_q^-(|k+l+m+1|)^2}. $$
It is easy to check that
$$\frac{\tilde H_q^-(|n+1|,|n-1|)}{\tilde H_q^-(|n|)^2}=\begin{cases}1+q^n, & n\geq 1,\\
1, & n=0,\\
1+q^{-n}, & n\leq -1.\end{cases} $$
Thus, the result holds for any solution  to the recursion
$$\frac{C_n^2}{C_{n-1}C_{n+1}}=\begin{cases}1, & n\geq 1,\\ 2, & n=0,\\ q^n, & n\leq -1.\end{cases} $$
The given solution corresponds to the initial values $C_0=C_1=1$. 
\end{proof}

 \end{document}
 \end